\documentclass[12pt,abstracton]{scrartcl}
\usepackage[ansinew]{inputenc}
\usepackage[T1]{fontenc}
\usepackage[english]{babel}
\usepackage{amsmath}
\usepackage{amssymb}
\usepackage{amsthm}
\usepackage{mathrsfs}
\usepackage{array}
\usepackage{subfig}
\usepackage{graphicx}
\usepackage{booktabs}
\usepackage{longtable}
\usepackage{multirow}
\usepackage{marvosym}
\usepackage[flushmargin,hang]{footmisc}
 
\newtheorem{Theorem}{Theorem}[section]
\newtheorem{Corollary}[Theorem]{Corollary}
\newtheorem{Lemma}[Theorem]{Lemma}
\theoremstyle{definition}

\newtheorem{Remark}[Theorem]{Remark}
\newtheorem{Example}[Theorem]{Example}

\numberwithin{equation}{section}
\numberwithin{figure}{section}
\numberwithin{table}{section}

\setlength{\parindent}{0mm}
\setlength{\jot}{10pt}

\DeclareMathOperator{\tr}{tr}

\DeclareMathOperator{\eff}{eff}
\DeclareMathOperator{\Diag}{Diag}

\DeclareMathOperator{\Po}{Po}
\DeclareMathOperator{\rank}{rank}

\allowdisplaybreaks 
\setlength{\textheight}{23.5cm}
\addtokomafont{disposition}{\boldmath}

\makeatletter
\g@addto@macro{\thm@space@setup}{\thm@headpunct{}}
\renewenvironment{proof}[1][\proofname]{\par
  \pushQED{\qed}%
  \normalfont \topsep6\p@\@plus6\p@\relax
  \trivlist
  \item[\hskip\labelsep
        \bfseries
    #1\@addpunct{:}]\ignorespaces
}{%
  \popQED\endtrivlist\@endpefalse
}
\makeatother

\makeatletter                                        
\def\blfootnote{\xdef\@thefnmark{}\@footnotetext}
\makeatother

\begin{document}

\title{\vspace{-4pt}Optimal Designs for Poisson Count Data with Gamma Block Effects}
\author{Marius Schmidt \and Rainer Schwabe}
\date{\today}
\maketitle

\vspace{-30pt}
\begin{abstract}
The Poisson-Gamma model is a generalization of the Poisson model, which can be used for modelling count data. We show that the $D$-optimality criterion for the Poisson-Gamma model is equivalent to a combined weighted optimality criterion of $D$-optimality and $D_s$-optimality for the Poisson model. Moreover, we determine the $D$-optimal designs for the Poisson-Gamma model for multiple regression with an arbitrary number of covariates, obtaining the $D_s$-optimal designs for the Poisson and Poisson-Gamma model as a special case. For linear optimality criteria like $L$- and $c$-optimality it is shown that the optimal designs in the Poisson and Poisson-Gamma model coincide.\\ \\
\textbf{Keywords:} Poisson-Gamma model, Poisson model, $D$-optimality, $D_s$-optimality, Linear optimality criteria, multiple regression
\end{abstract}

\blfootnote{M. Schmidt (\Letter) $\cdot$ R. Schwabe \\ \noindent Institut für Mathematische Stochastik \\ Otto-von-Guericke-Universität Magdeburg \\ PF 4120, 39016 Magdeburg, Germany \\ E-Mail: marius.schmidt@ovgu.de \\ E-Mail: rainer.schwabe@ovgu.de}

\section{Introduction}

Count data arises in experiments, where the number of objects or occurrences of events of interest is observed. Frequently, the Poisson model is used to model such data, in which the expected value of the Poisson distributed response variable is linked to a linear predictor consisting of covariates and unknown model parameters. In such experiments there may be repeated measurements for each statistical unit. Assuming a Gamma distributed random effect for each statistical unit, we obtain the Poisson-Gamma model as a generalization of the Poisson model.\\
The estimates of the unknown model parameters depend on the choice of the covariates. In order to obtain the most accurate parameter estimates, we determine optimal designs, which specify the optimal values and frequencies of the covariates. With such designs the number of experimental units can be reduced, leading to a lowering of experimental costs. Furthermore, for example in animal testing, the use of optimal designs may be required because of ethical reasons.\\
For the Poisson model Ford et al.\ (1992) and Rodríguez-Torreblanca and Rodríguez-Díaz (2007) determined $D$- and $c$-optimal designs for the case of one covariate. Wang et al.\ (2006) made numerical investigations for two covariates with and without an additional interaction term. For the case of multiple regression with an arbitrary number of covariates Russell et al.\ (2009) derived $D$-optimal designs and Schmidt (2018) determined $c$-, $L$- and $\phi_p$-optimal designs. In the context of intelligence testing Graßhoff et al.\ (2016, 2018) considered the Poisson-Gamma model with one measurement per statistical unit and computed $D$-optimal designs for a binary design region.\\
In Section \ref{Abschnitt Modell} we introduce the Poisson-Gamma model and derive the Fisher information matrix. Section \ref{Abschnitt Optimales Design} gives a brief introduction to the theory of optimal design of experiments and deals with information matrix relations between the Poisson and Poisson-Gamma model. We will be concerned with the determination of $D$-optimal designs for multiple regression with an arbitrary number of covariates in Section \ref{Abschnitt D-optimale Designs} and with optimal designs for linear optimality criteria in Section \ref{Abschnitt Optimale Designs für lineare Optimalitätskriterien}. Since the model under consideration is nonlinear, the optimal designs depend on the unknown parameters and are therefore called locally optimal (cf.\ Chernoff, 1953). We note that most proofs are deferred to an appendix.

\section{The Poisson-Gamma model} \label{Abschnitt Modell}

We consider $n$ statistical units, for example groups or individuals, for each of which $m$ experiments with response variables $Y_{ij}$, $i=1,\ldots,n$, $j=1,\ldots,m$, are performed. To each statistical unit a Gamma distributed block effect $\Theta_i\sim\gamma(a,b)$ with known shape parameter $a>0$ and known rate parameter $b>0$ is assigned. The probability density function of the Gamma distribution $\gamma(a,b)$ is given by $f_{\gamma}(\theta)=b^a\cdot\Gamma(a)^{-1}\cdot\theta^{a-1}\cdot e^{-b\cdot\theta}$ for $\theta>0$, where $\Gamma(a)$ denotes the Gamma function, which satisfies $\Gamma(a+1)=a\cdot\Gamma(a)$. We assume that given $\Theta_i=\theta_i$ the random variables $Y_{ij}$ are independent Poisson distributed with parameter $\lambda_{ij}$ depending on $\theta_i$. The expected value $\lambda_{ij}$ is related via the canonical link function to the linear predictor, which consists of a fixed effects term $\boldsymbol{f}(\boldsymbol{x}_{ij})^T\boldsymbol{\beta}$ and an additive random effect $v_i=\ln(\theta_i)$:
\begin{align}
\ln(\lambda_{ij})=\boldsymbol{f}(\boldsymbol{x}_{ij})^T\boldsymbol{\beta}+v_i.
\end{align} 
It follows that $\lambda_{ij}=\theta_i\cdot\exp(\boldsymbol{f}(\boldsymbol{x}_{ij})^T\boldsymbol{\beta})$. Here $\boldsymbol{x}_{ij}\in\mathbb{R}^k$ is the vector of covariates, the vector $\boldsymbol{f}=(1,f_1,\ldots,f_{p-1})^T$ consists of known regression functions and the vector $\boldsymbol{\beta}=(\beta_0,\ldots,\beta_{p-1})^T$ is the unknown parameter vector.\\
In the following, an arbitrary statistical unit $i$ is considered. For simplicity of notation, the index $i$ is suppressed. The Poisson and Gamma distribution are conjugate distributions and the probability density function of $\boldsymbol{Y}=(Y_1,\ldots,Y_m)$ can be derived analytically.
\begin{Theorem}\label{Satz Dichte Poisson-Gamma-Modell}
The probability density function of $\boldsymbol{Y}=(Y_1,\ldots,Y_m)$ is given by
\begin{align}
f_{\boldsymbol{Y}}(\boldsymbol{y})=\frac{\Gamma\bigl(a+\sum_{j=1}^m y_j\bigr)}{\Gamma(a)\cdot\prod_{j=1}^m y_j!}\cdot\frac{e^{\sum_{j=1}^m \boldsymbol{f}(\boldsymbol{x}_j)^T\boldsymbol{\beta}\cdot y_j}}{\left(b+\sum_{j=1}^m e^{\boldsymbol{f}(\boldsymbol{x}_j)^T\boldsymbol{\beta}}\right)^{\sum_{j=1}^m y_j}}\cdot\left(\frac{b}{b+\sum_{j=1}^m e^{\boldsymbol{f}(\boldsymbol{x}_j)^T\boldsymbol{\beta}}}\right)^a.
\end{align}
\end{Theorem}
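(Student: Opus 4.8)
The plan is to derive $f_{\boldsymbol{Y}}$ as the marginal density of the Poisson-Gamma mixture, i.e.\ to integrate the conditional joint probability mass function of $\boldsymbol{Y}$ given $\Theta=\theta$ against the prior density $f_{\gamma}$ of $\Theta$. Writing $\mu_j:=\exp(\boldsymbol{f}(\boldsymbol{x}_j)^T\boldsymbol{\beta})$, so that $\lambda_j=\theta\cdot\mu_j$, the assumed conditional independence of $Y_1,\ldots,Y_m$ given $\Theta=\theta$ yields
\begin{align*}
f_{\boldsymbol{Y}\mid\Theta}(\boldsymbol{y}\mid\theta)=\prod_{j=1}^m\frac{(\theta\mu_j)^{y_j}e^{-\theta\mu_j}}{y_j!}=\frac{\prod_{j=1}^m\mu_j^{y_j}}{\prod_{j=1}^m y_j!}\cdot\theta^{\,\sum_{j=1}^m y_j}\cdot e^{-\theta\sum_{j=1}^m\mu_j}.
\end{align*}

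First I would multiply this by $f_{\gamma}(\theta)=b^a\Gamma(a)^{-1}\theta^{a-1}e^{-b\theta}$ and integrate over $\theta\in(0,\infty)$. Collecting the powers of $\theta$ and the two exponential factors, the $\theta$-dependent part of the integrand is $\theta^{\,a+\sum_{j=1}^m y_j-1}e^{-(b+\sum_{j=1}^m\mu_j)\theta}$, which is an unnormalized Gamma density; hence the integral equals $\Gamma\bigl(a+\sum_{j=1}^m y_j\bigr)\cdot\bigl(b+\sum_{j=1}^m\mu_j\bigr)^{-a-\sum_{j=1}^m y_j}$. Substituting back gives
\begin{align*}
f_{\boldsymbol{Y}}(\boldsymbol{y})=\frac{\Gamma\bigl(a+\sum_{j=1}^m y_j\bigr)}{\Gamma(a)\cdot\prod_{j=1}^m y_j!}\cdot\frac{b^a\cdot\prod_{j=1}^m\mu_j^{y_j}}{\bigl(b+\sum_{j=1}^m\mu_j\bigr)^{a+\sum_{j=1}^m y_j}}.
\end{align*}

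It then remains to put this into the form stated in the theorem. Using $\prod_{j=1}^m\mu_j^{y_j}=e^{\sum_{j=1}^m\boldsymbol{f}(\boldsymbol{x}_j)^T\boldsymbol{\beta}\,y_j}$, splitting the denominator as $\bigl(b+\sum_{j=1}^m\mu_j\bigr)^{\sum_{j=1}^m y_j}$ times $\bigl(b+\sum_{j=1}^m\mu_j\bigr)^{a}$, and pairing the second factor with $b^a$ produces exactly the claimed expression. The argument is essentially a bookkeeping exercise with exponents, so I do not expect a genuine obstacle; the only substantive ingredients are the conjugacy of the Poisson and Gamma families (which guarantees the closed form and that $f_{\boldsymbol{Y}}$ integrates to one) and the evaluation of the Gamma integral, and since $\boldsymbol{y}$ is held fixed throughout, no interchange of summation and integration is required.
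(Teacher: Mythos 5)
Your proposal is correct and follows essentially the same route as the paper: condition on $\Theta=\theta$, use conditional independence to write the joint Poisson mass function, multiply by the Gamma density, and evaluate the resulting integral by recognizing an unnormalized Gamma density with updated parameters $\tilde a=a+\sum_{j=1}^m y_j$ and $\tilde b=b+\sum_{j=1}^m e^{\boldsymbol{f}(\boldsymbol{x}_j)^T\boldsymbol{\beta}}$. The remaining steps are the same exponent bookkeeping as in the paper's proof, so there is nothing to add.
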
 
Since the marginal random variable $Y_j$, $j=1,\ldots,m$, is Poisson-Gamma distributed, the expectation of $Y_j$ is given by $E(Y_j)=\frac{a}{b}\cdot\exp(\boldsymbol{f}(\boldsymbol{x}_j)^T\boldsymbol{\beta})$. Using Theorem \ref{Satz Dichte Poisson-Gamma-Modell} we derive the Fisher information matrix for a single statistical unit in the next theorem, where $\boldsymbol{e}_1\in\mathbb{R}^p$ denotes the first standard unit vector.
\begin{Theorem}\label{Satz Fisher-Informationsmatrix Poisson-Gamma-Modell}
The Fisher information matrix for the parameter vector $\boldsymbol{\beta}$ is given by
\begin{align}
\boldsymbol{I}(\boldsymbol{\beta})=\frac{a}{b}\cdot\left(\boldsymbol{I}_{\Po}(\boldsymbol{\beta})-\frac{\boldsymbol{I}_{\Po}(\boldsymbol{\beta})\boldsymbol{e}_1\boldsymbol{e}_1^T\boldsymbol{I}_{\Po}(\boldsymbol{\beta})}{\boldsymbol{e}_1^T\boldsymbol{I}_{\Po}(\boldsymbol{\beta})\boldsymbol{e}_1+b}\right),\label{Fisher-Informationsmatrix}
\end{align}
where $\boldsymbol{I}_{\Po}(\boldsymbol{\beta})=\sum_{j=1}^m \exp\bigl(\boldsymbol{f}(\boldsymbol{x}_j)^T\boldsymbol{\beta}\bigr)\cdot\boldsymbol{f}(x_j)\boldsymbol{f}(x_j)^T$ is the Fisher information matrix for the Poisson model.
\end{Theorem}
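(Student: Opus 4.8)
The plan is to obtain $\boldsymbol{I}(\boldsymbol{\beta})$ directly from the log-likelihood associated with the density in Theorem~\ref{Satz Dichte Poisson-Gamma-Modell}. Writing $S=\sum_{j=1}^m y_j$, $\mu_j=\exp(\boldsymbol{f}(\boldsymbol{x}_j)^T\boldsymbol{\beta})$ and $M=\sum_{j=1}^m\mu_j$, the logarithm of that density splits into a term free of $\boldsymbol{\beta}$ and
\begin{align*}
\ell(\boldsymbol{\beta})=\sum_{j=1}^m y_j\,\boldsymbol{f}(\boldsymbol{x}_j)^T\boldsymbol{\beta}-(a+S)\ln(b+M).
\end{align*}
First I would record the elementary identities $\partial\mu_j/\partial\boldsymbol{\beta}=\mu_j\boldsymbol{f}(\boldsymbol{x}_j)$ and $\partial M/\partial\boldsymbol{\beta}=\sum_j\mu_j\boldsymbol{f}(\boldsymbol{x}_j)$, and -- since the first component of $\boldsymbol{f}$ is the constant regression function $1$ -- observe that $\sum_j\mu_j\boldsymbol{f}(\boldsymbol{x}_j)=\boldsymbol{I}_{\Po}(\boldsymbol{\beta})\boldsymbol{e}_1$, $\sum_j\mu_j=\boldsymbol{e}_1^T\boldsymbol{I}_{\Po}(\boldsymbol{\beta})\boldsymbol{e}_1$ and $\sum_j\mu_j\boldsymbol{f}(\boldsymbol{x}_j)\boldsymbol{f}(\boldsymbol{x}_j)^T=\boldsymbol{I}_{\Po}(\boldsymbol{\beta})$.

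Next I would differentiate twice. The score is $\nabla\ell(\boldsymbol{\beta})=\sum_j y_j\boldsymbol{f}(\boldsymbol{x}_j)-(a+S)(b+M)^{-1}\boldsymbol{I}_{\Po}(\boldsymbol{\beta})\boldsymbol{e}_1$, and differentiating once more -- the first summand being constant in $\boldsymbol{\beta}$, only the quotient contributes -- the Hessian becomes $-(a+S)$ times the deterministic matrix $(b+M)^{-1}\boldsymbol{I}_{\Po}(\boldsymbol{\beta})-(b+M)^{-2}\boldsymbol{I}_{\Po}(\boldsymbol{\beta})\boldsymbol{e}_1\boldsymbol{e}_1^T\boldsymbol{I}_{\Po}(\boldsymbol{\beta})$, obtained by the vector quotient rule applied to $(b+M)^{-1}\sum_j\mu_j\boldsymbol{f}(\boldsymbol{x}_j)$. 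This is the step where the bookkeeping has to be done with some care -- keeping track of which factor is a matrix and which a scalar, and recognising the rank-one term $\boldsymbol{u}\boldsymbol{u}^T$ with $\boldsymbol{u}=\boldsymbol{I}_{\Po}(\boldsymbol{\beta})\boldsymbol{e}_1$ -- but no genuine difficulty is expected here.

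Finally I would take the negative expectation. Since the Hessian equals $-(a+S)$ times a nonrandom matrix, $\boldsymbol{I}(\boldsymbol{\beta})=-E[\nabla^2\ell(\boldsymbol{\beta})]=E[a+S]\cdot\bigl((b+M)^{-1}\boldsymbol{I}_{\Po}(\boldsymbol{\beta})-(b+M)^{-2}\boldsymbol{I}_{\Po}(\boldsymbol{\beta})\boldsymbol{e}_1\boldsymbol{e}_1^T\boldsymbol{I}_{\Po}(\boldsymbol{\beta})\bigr)$. Using the marginal means $E(Y_j)=\frac{a}{b}\mu_j$ recorded just before the theorem gives $E[a+S]=a+\frac{a}{b}M=\frac{a}{b}(b+M)$, so one power of $b+M$ cancels and, inserting $M=\boldsymbol{e}_1^T\boldsymbol{I}_{\Po}(\boldsymbol{\beta})\boldsymbol{e}_1$ in the remaining denominator, the expression collapses to \eqref{Fisher-Informationsmatrix}. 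I would also note in passing that the interchange of differentiation and integration used to identify $-E[\nabla^2\ell]$ with the Fisher information is justified by the standard regularity of the density in Theorem~\ref{Satz Dichte Poisson-Gamma-Modell}.
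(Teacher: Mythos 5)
Your proposal is correct and follows essentially the same route as the paper: take the log-density from Theorem~\ref{Satz Dichte Poisson-Gamma-Modell}, differentiate twice, apply $\boldsymbol{I}(\boldsymbol{\beta})=-E\bigl[\nabla^2\ell(\boldsymbol{\beta})\bigr]$ together with $E(Y_j)=\frac{a}{b}\exp(\boldsymbol{f}(\boldsymbol{x}_j)^T\boldsymbol{\beta})$, and rewrite the result in terms of $\boldsymbol{I}_{\Po}(\boldsymbol{\beta})$. Your explicit identities $\sum_j\mu_j\boldsymbol{f}(\boldsymbol{x}_j)=\boldsymbol{I}_{\Po}(\boldsymbol{\beta})\boldsymbol{e}_1$ and $\sum_j\mu_j=\boldsymbol{e}_1^T\boldsymbol{I}_{\Po}(\boldsymbol{\beta})\boldsymbol{e}_1$ just make precise the step the paper summarizes as ``identifying the components.''
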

Since the observations are independent between the statistical units, the Fisher information matrix $\boldsymbol{I}_{\text{Total}}(\boldsymbol{\beta})$ for $n$ statistical units is the sum of the Fisher information matrices $\boldsymbol{I}_i(\boldsymbol{\beta})$ for each statistical unit $i$, that is $\boldsymbol{I}_{\text{Total}}(\boldsymbol{\beta})=\sum_{i=1}^n \boldsymbol{I}_i(\boldsymbol{\beta})$.\\
If only one observation per statistical unit is considered, that is for $m=1$, the generalized negative binomial model results (cf.\ Graßhoff et al., 2016), for which the Fisher information matrix is given by:
\begin{align}
\boldsymbol{I}_{\text{Total}}(\boldsymbol{\beta})=\sum_{i=1}^n \frac{a\cdot e^{\boldsymbol{f}(\boldsymbol{x}_i)^T\boldsymbol{\beta}}}{e^{\boldsymbol{f}(\boldsymbol{x}_i)^T\boldsymbol{\beta}}+b}\cdot \boldsymbol{f}(\boldsymbol{x}_i)\boldsymbol{f}(\boldsymbol{x}_i)^T.
\end{align}
Due to the random block effect $v_i=\ln(\theta_i)$, the random variables $Y_1,\ldots,Y_m$ for a statistical unit are not independent. Therefore, the Fisher information matrix \eqref{Fisher-Informationsmatrix} for a statistical unit cannot be represented as the sum of the Fisher information matrices for each observation.

\section{Design, information and optimality criteria} \label{Abschnitt Optimales Design}

The quality of the parameter estimates depends on the choice of experimental settings. In order to estimate the parameters as precisely as possible, the experimental settings have to be chosen optimally in a certain sense. First, we consider a single statistical unit. A design consists of different experimental settings $\boldsymbol{x}_1,\ldots,\boldsymbol{x}_l\in\mathscr{X}$ with replications $r_j\in\mathbb{N}$, $\sum_{j=1}^l r_j=m$, where $\mathscr{X}\subset\mathbb{R}^k$ is the design region. Instead of the replications, relative frequencies $w_j=r_j/m$ are often considered, which indicate how frequently the corresponding experimental setting is used for a statistical unit. This concept is generalized to that of approximate individual designs
\begin{align}
\xi=\begin{Bmatrix}\boldsymbol{x}_1 & \ldots & \boldsymbol{x}_l\\w_1 & \ldots & w_l\end{Bmatrix},
\end{align}
which are probability measures on $\mathscr{X}$ with finite support (cf.\ Silvey, 1980, p.\ 15). Such a design assigns arbitrary weights $0\leq w_1,\ldots,w_l\leq1$ with $\sum_{j=1}^l w_j=1$ to the experimental settings. We denote the set of all approximate designs $\xi$ on $\mathscr{X}$ by $\Xi$. The information matrix $\boldsymbol{M}(\xi;\boldsymbol{\beta})$ for a design $\xi$ is obtained by standardising the Fisher information matrix with the number of observations $m$ and allowing continuous weights.\\
For the entire experiment with $n$ statistical units the population design
\begin{align}
\zeta=\begin{Bmatrix}\xi_1 & \ldots & \xi_r\\q_1 & \ldots & q_r\end{Bmatrix}
\end{align}
consists of the individual designs $\xi_i$ and the corresponding weights $0\leq q_i\leq1$ with $\sum_{i=1}^r q_i=1$. These weights are the proportions of the statistical units obtaining the individual design $\xi_i$. The observations between the statistical units are independent, hence the information matrix for the population design $\zeta$ can be obtained as $\boldsymbol{M}(\zeta;\boldsymbol{\beta})=\sum_{i=1}^r q_i\boldsymbol{M}(\xi_i;\boldsymbol{\beta})$.\\
Optimal designs are based on the optimization of a real-valued function $\Phi$ of the information matrix with respect to the design (cf.\ Silvey, 1980, p.\ 10). We introduce some commonly used optimality criteria with respect to individual designs $\xi$. For population designs $\zeta$ the optimality criteria can be defined analogously.\\
One of the most popular optimality criteria is $D$-optimality. A design $\xi^{\ast}$ with regular information matrix $\boldsymbol{M}(\xi^{\ast};\boldsymbol{\beta})$ is $D$-optimal if $\det\bigl(\boldsymbol{M}(\xi^\ast;\boldsymbol{\beta})\bigr)\geq\det\bigl(\boldsymbol{M}(\xi;\boldsymbol{\beta})\bigr)$ holds for all $\xi\in\Xi$. The $D$-optimal design minimizes the volume of the confidence ellipsoid for the parameters (cf.\ Silvey, 1980, p.\ 10).\\
If not the entire parameter vector is to be estimated, but certain linear combinations $\boldsymbol{A}^T \boldsymbol{\beta}$, where $\boldsymbol{A}$ is a $p\times s\thinspace$-matrix with $\rank(\boldsymbol{A})=s<p$, then the information matrix of the optimal design need not be regular. Therefore, the concept of identifiability is introduced. Given $\boldsymbol{\beta}$ the linear combinations $\boldsymbol{A}^T \boldsymbol{\beta}$ are identifiable for a design $\xi$ if $\boldsymbol{A}=\boldsymbol{M}(\xi;\boldsymbol{\beta}) \hspace{1pt} \boldsymbol{H}$ holds for a matrix $\displaystyle \boldsymbol{H}\in\mathbb{R}^{p\times s}$ (cf.\ Silvey, 1980, p.\ 25).\\
To estimate $\boldsymbol{A}^T \boldsymbol{\beta}$, the identifiability condition has to be satisfied. The $D_A$-optimality criterion can be used to compute optimal designs for estimation of $\boldsymbol{A}^T \boldsymbol{\beta}$. A design $\xi^{\ast}$ is $D_A$-optimal if $\boldsymbol{A}^T \boldsymbol{\beta}$ is identifiable and $\det\bigl(\boldsymbol{A}^T \boldsymbol{M}(\xi^\ast;\boldsymbol{\beta})^{-} \boldsymbol{A}\bigr)\leq\det\bigl(\boldsymbol{A}^T \boldsymbol{M}(\xi;\boldsymbol{\beta})^{-} \boldsymbol{A}\bigr)$ holds for all $\xi\in\Xi$ for which $\boldsymbol{A}^T \boldsymbol{\beta}$ is identifiable. Here $\boldsymbol{M}(\xi;\boldsymbol{\beta})^-$ is a generalized inverse of $\boldsymbol{M}(\xi;\boldsymbol{\beta})$. If only $s$ individual parameters are of interest, then the criterion is called $D_s$-optimality. For example, for $\boldsymbol{A}^T=(\boldsymbol{0}_{p-1}, \boldsymbol{I}_{p-1})$ we have $D_s$-optimality for the $s=p-1$ parameters $\beta_1,\ldots,\beta_{p-1}$ (cf.\ Silvey, 1980, p.\ 11, 26).\\
A linear optimality criterion is $L$-optimality. Let $\boldsymbol{B}=\boldsymbol{A}\boldsymbol{A}^T$ be a symmetric positive definite matrix. A design $\xi^{\ast}$ is $L$-optimal if $\boldsymbol{A}^T \boldsymbol{\beta}$ is identifiable and $\tr\bigl(\boldsymbol{M}(\xi^{\ast};\boldsymbol{\beta})^-\boldsymbol{B}\bigr)\leq\tr\bigl(\boldsymbol{M}(\xi;\boldsymbol{\beta})^-\boldsymbol{B}\bigr)$ holds for all $\xi\in\Xi$ for which $\boldsymbol{A}^T \boldsymbol{\beta}$ is identifiable. If $\boldsymbol{B}=\boldsymbol{I}$ is the identity matrix, then $A$-optimality results. An $A$-optimal design minimizes the sum of the asymptotic variances of the estimators for the individual components of the parameter vector. For $\boldsymbol{B}=\boldsymbol{c}\boldsymbol{c}^T$ with $\boldsymbol{c}\in\mathbb{R}^p$ we obtain $c$-optimality, for which the criterion function can be written as $\boldsymbol{c}^T \boldsymbol{M}(\xi;\boldsymbol{\beta})^- \boldsymbol{c}$. The $c$-optimality criterion aims at estimating the linear combination $\boldsymbol{c}^T \boldsymbol{\beta}$ with minimal asymptotic variance (cf.\ Atkinson et al., 2007, p.\ 142--143).\\
Now we consider the Poisson-Gamma model. From Theorem \ref{Satz Fisher-Informationsmatrix Poisson-Gamma-Modell} we generalize the Fisher information matrix to the information matrix of an individual design $\xi$:
\begin{align}
\boldsymbol{M}(\xi;\boldsymbol{\beta})=\frac{a}{b}\cdot\left(\boldsymbol{M}_{\Po}(\xi;\boldsymbol{\beta})-\frac{\boldsymbol{M}_{\Po}(\xi;\boldsymbol{\beta})\boldsymbol{e}_1\boldsymbol{e}_1^T\boldsymbol{M}_{\Po}(\xi;\boldsymbol{\beta})}{\boldsymbol{e}_1^T\boldsymbol{M}_{\Po}(\xi;\boldsymbol{\beta})\boldsymbol{e}_1+\frac{b}{m}}\right).\label{Darstellung Informationsmatrix Poisson-Gamma-Modell}
\end{align}
Here $\boldsymbol{M}_{\Po}(\xi;\boldsymbol{\beta})=\sum_{j=1}^l w_j\cdot\exp\bigl(\boldsymbol{f}(\boldsymbol{x}_j)^T\boldsymbol{\beta}\bigr)\cdot \boldsymbol{f}(\boldsymbol{x}_j)\boldsymbol{f}(\boldsymbol{x}_j)^T$ is the information matrix for the Poisson model. With the design matrix $\boldsymbol{X}=\big(\boldsymbol{f}(\boldsymbol{x}_1),\ldots,\boldsymbol{f}(\boldsymbol{x}_l)\big)^T$ and the diagonal matrices $\boldsymbol{W}=\Diag(w_1,\ldots,w_l)$ and $\boldsymbol{\Lambda}=\Diag\bigl(\exp\bigl(\boldsymbol{f}(\boldsymbol{x}_1)^T\boldsymbol{\beta}\bigr),\ldots,\exp\bigl(\boldsymbol{f}(\boldsymbol{x}_l)^T\boldsymbol{\beta}\bigr)\bigr)$ the information matrix of the Poisson model can be written as $\boldsymbol{M}_{\Po}(\xi;\boldsymbol{\beta})=\boldsymbol{X}^T\boldsymbol{W}\boldsymbol{\Lambda}\boldsymbol{X}$.\\
Since $a>0$ is a multiplicative factor in the information matrix for the Poisson-Gamma model, an optimal design does not depend on $a$. Based on Lemma \ref{Lemma Matrix für Darstellung Informationsmatrix Poisson-Gamma-Modell} in the Appendix we obtain the following relations between the information matrices of the Poisson and Poisson-Gamma model.
\begin{Lemma}\label{Lemma Rang Zusammenhang Informationsmatrix}
For a design $\xi$ the information matrices $\boldsymbol{M}(\xi;\boldsymbol{\beta})$ in the Poisson-Gamma model and $\boldsymbol{M}_{\Po}(\xi;\boldsymbol{\beta})$ in the Poisson model have the same rank.
\end{Lemma}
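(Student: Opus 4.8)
The plan is to exploit the rank-one correction structure of the Poisson-Gamma information matrix in \eqref{Darstellung Informationsmatrix Poisson-Gamma-Modell}. Writing $\boldsymbol{M}_{\Po}:=\boldsymbol{M}_{\Po}(\xi;\boldsymbol{\beta})$ and $c:=\boldsymbol{e}_1^T\boldsymbol{M}_{\Po}\boldsymbol{e}_1+b/m>0$, we have
\begin{align}
\boldsymbol{M}(\xi;\boldsymbol{\beta})=\frac{a}{b}\,\boldsymbol{M}_{\Po}\Bigl(\boldsymbol{I}_p-\tfrac{1}{c}\,\boldsymbol{e}_1\boldsymbol{e}_1^T\boldsymbol{M}_{\Po}\Bigr).
\end{align}
The factor $a/b>0$ does not affect the rank, so it suffices to compare the ranks of $\boldsymbol{M}_{\Po}$ and $\boldsymbol{M}_{\Po}(\boldsymbol{I}_p-c^{-1}\boldsymbol{e}_1\boldsymbol{e}_1^T\boldsymbol{M}_{\Po})$. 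First I would note the trivial inequality $\rank(\boldsymbol{M}(\xi;\boldsymbol{\beta}))\le\rank(\boldsymbol{M}_{\Po})$, since the former is a product with $\boldsymbol{M}_{\Po}$ as a left factor. The real content is the reverse inequality.

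For the reverse direction I would show that the matrix $\boldsymbol{I}_p-c^{-1}\boldsymbol{e}_1\boldsymbol{e}_1^T\boldsymbol{M}_{\Po}$ is nonsingular, which immediately gives $\rank(\boldsymbol{M}_{\Po}(\boldsymbol{I}_p-c^{-1}\boldsymbol{e}_1\boldsymbol{e}_1^T\boldsymbol{M}_{\Po}))=\rank(\boldsymbol{M}_{\Po})$ and closes the argument. Its determinant is computable by the matrix determinant lemma: $\det(\boldsymbol{I}_p-c^{-1}\boldsymbol{e}_1\boldsymbol{e}_1^T\boldsymbol{M}_{\Po})=1-c^{-1}\boldsymbol{e}_1^T\boldsymbol{M}_{\Po}\boldsymbol{e}_1=\tfrac{b/m}{c}>0$. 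Hence the correction factor is invertible for every design $\xi$, and the two ranks agree. (This is presumably the role of the referenced Lemma \ref{Lemma Matrix f�r Darstellung Informationsmatrix Poisson-Gamma-Modell}, which I would simply cite for the representation and the invertibility of this factor.)

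The main obstacle, such as it is, is bookkeeping: one must be careful that $\boldsymbol{M}_{\Po}$ need not be invertible (indeed the whole point of the lemma is the singular case relevant to $D_s$-optimality), so the argument cannot go through $\boldsymbol{M}_{\Po}^{-1}$ and must stay at the level of the factorization $\boldsymbol{M}(\xi;\boldsymbol{\beta})=(a/b)\,\boldsymbol{M}_{\Po}\,\boldsymbol{N}$ with $\boldsymbol{N}$ invertible. Once that factorization with invertible $\boldsymbol{N}$ is in hand, $\rank(\boldsymbol{M}_{\Po}\boldsymbol{N})=\rank(\boldsymbol{M}_{\Po})$ is the standard fact that right-multiplication by a nonsingular matrix preserves rank, and the proof concludes. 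I would also remark that, by symmetry of $\boldsymbol{M}(\xi;\boldsymbol{\beta})$, one equally has $\boldsymbol{M}(\xi;\boldsymbol{\beta})=(a/b)\,\boldsymbol{N}^T\boldsymbol{M}_{\Po}$, but the one-sided version already suffices.
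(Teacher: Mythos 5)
Your proposal is correct and follows essentially the same route as the paper: the paper factorizes $\boldsymbol{M}(\xi;\boldsymbol{\beta})=\frac{a}{b}\,\boldsymbol{L}(\xi;\boldsymbol{\beta})\,\boldsymbol{M}_{\Po}(\xi;\boldsymbol{\beta})$ with the regular matrix $\boldsymbol{L}(\xi;\boldsymbol{\beta})$ of Lemma \ref{Lemma Matrix f�r Darstellung Informationsmatrix Poisson-Gamma-Modell}, and your factor $\boldsymbol{I}_p-c^{-1}\boldsymbol{e}_1\boldsymbol{e}_1^T\boldsymbol{M}_{\Po}$ is exactly $\boldsymbol{L}(\xi;\boldsymbol{\beta})^T$, used on the other side. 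The only cosmetic difference is that you establish invertibility via the matrix determinant lemma, while the paper reads the same determinant $\frac{b/m}{c}$ off the triangular structure of $\boldsymbol{L}$; your care in avoiding $\boldsymbol{M}_{\Po}^{-1}$ matches the paper's intent.
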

\begin{Theorem}\label{Satz Identifizierbarkeit}
Let $\boldsymbol{A}$ be a $p\times s$-matrix with $\rank(\boldsymbol{A})=s\leq p$. For a design $\xi$ the linear combinations $\boldsymbol{A}^T \boldsymbol{\beta}$ are identifiable in the Poisson-Gamma model if and only if the linear combinations $\boldsymbol{A}^T \boldsymbol{\beta}$ are identifiable for the design $\xi$ in the Poisson model.
\end{Theorem}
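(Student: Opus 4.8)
The plan is to reduce the statement to an identity between the column spaces of the two information matrices. Recall that, by definition, the linear combinations $\boldsymbol{A}^T\boldsymbol{\beta}$ are identifiable for a design $\xi$ in a given model precisely when $\boldsymbol{A}=\boldsymbol{M}\boldsymbol{H}$ is solvable in $\boldsymbol{H}\in\mathbb{R}^{p\times s}$, where $\boldsymbol{M}$ is the information matrix of that model; this is equivalent to saying that every column of $\boldsymbol{A}$ lies in $\Bild(\boldsymbol{M})$, i.e.\ $\Bild(\boldsymbol{A})\subseteq\Bild(\boldsymbol{M})$. Hence it suffices to show
\begin{align}
\Bild\bigl(\boldsymbol{M}(\xi;\boldsymbol{\beta})\bigr)=\Bild\bigl(\boldsymbol{M}_{\Po}(\xi;\boldsymbol{\beta})\bigr),
\end{align}
because then $\Bild(\boldsymbol{A})$ is contained in the one range if and only if it is contained in the other, which is exactly the asserted equivalence (in both directions simultaneously).

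First I would establish one inclusion. Factoring $\boldsymbol{M}_{\Po}(\xi;\boldsymbol{\beta})$ out to the left in the representation \eqref{Darstellung Informationsmatrix Poisson-Gamma-Modell} yields
\begin{align}
\boldsymbol{M}(\xi;\boldsymbol{\beta})=\boldsymbol{M}_{\Po}(\xi;\boldsymbol{\beta})\,\boldsymbol{K},\qquad \boldsymbol{K}:=\frac{a}{b}\left(\boldsymbol{I}_p-\frac{\boldsymbol{e}_1\boldsymbol{e}_1^T\boldsymbol{M}_{\Po}(\xi;\boldsymbol{\beta})}{\boldsymbol{e}_1^T\boldsymbol{M}_{\Po}(\xi;\boldsymbol{\beta})\boldsymbol{e}_1+\frac{b}{m}}\right),
\end{align}
which is well defined because $\boldsymbol{e}_1^T\boldsymbol{M}_{\Po}(\xi;\boldsymbol{\beta})\boldsymbol{e}_1+\frac{b}{m}=\sum_{j=1}^l w_j\exp\bigl(\boldsymbol{f}(\boldsymbol{x}_j)^T\boldsymbol{\beta}\bigr)+\frac{b}{m}>0$. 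Consequently each column of $\boldsymbol{M}(\xi;\boldsymbol{\beta})$ is $\boldsymbol{M}_{\Po}(\xi;\boldsymbol{\beta})$ applied to the corresponding column of $\boldsymbol{K}$, hence lies in $\Bild(\boldsymbol{M}_{\Po}(\xi;\boldsymbol{\beta}))$, so that $\Bild(\boldsymbol{M}(\xi;\boldsymbol{\beta}))\subseteq\Bild(\boldsymbol{M}_{\Po}(\xi;\boldsymbol{\beta}))$.

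To upgrade this inclusion to equality I would invoke Lemma \ref{Lemma Rang Zusammenhang Informationsmatrix}: the two matrices have the same rank, so the two subspaces have the same (finite) dimension, and a subspace contained in another of equal dimension must coincide with it. This proves the displayed range identity, and combined with the reformulation of identifiability from the first step the theorem follows at once.

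I do not expect a genuine obstacle here; the only points that need care are the standard reformulation of the condition $\boldsymbol{A}=\boldsymbol{M}\boldsymbol{H}$ as a range inclusion, and the remark that, $\boldsymbol{M}(\xi;\boldsymbol{\beta})$ and $\boldsymbol{M}_{\Po}(\xi;\boldsymbol{\beta})$ being symmetric, their column and row spaces agree, so that one inclusion together with the rank equality already gives everything. If one wished to avoid Lemma \ref{Lemma Rang Zusammenhang Informationsmatrix}, one could instead observe that the rank-one update $\boldsymbol{I}_p-\boldsymbol{e}_1\boldsymbol{e}_1^T\boldsymbol{M}_{\Po}(\xi;\boldsymbol{\beta})\big/\bigl(\boldsymbol{e}_1^T\boldsymbol{M}_{\Po}(\xi;\boldsymbol{\beta})\boldsymbol{e}_1+\tfrac{b}{m}\bigr)$ has determinant $\tfrac{b}{m}\big/\bigl(\boldsymbol{e}_1^T\boldsymbol{M}_{\Po}(\xi;\boldsymbol{\beta})\boldsymbol{e}_1+\tfrac{b}{m}\bigr)\neq0$, hence $\boldsymbol{K}$ is invertible, which yields the reverse inclusion $\Bild(\boldsymbol{M}_{\Po}(\xi;\boldsymbol{\beta}))\subseteq\Bild(\boldsymbol{M}(\xi;\boldsymbol{\beta}))$ directly.
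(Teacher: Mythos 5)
Your proposal is correct and is in substance the paper's own argument: the factor $\boldsymbol{K}$ you introduce is exactly $\tfrac{a}{b}\boldsymbol{L}(\xi;\boldsymbol{\beta})^T$ from the appendix lemma giving $\boldsymbol{M}(\xi;\boldsymbol{\beta})=\tfrac{a}{b}\,\boldsymbol{M}_{\Po}(\xi;\boldsymbol{\beta})\boldsymbol{L}(\xi;\boldsymbol{\beta})^T$ with $\boldsymbol{L}$ regular, and your invertibility check for $\boldsymbol{K}$ is the same determinant computation used there. The only cosmetic difference is that you phrase the conclusion as the column-space identity $\Bild(\boldsymbol{M}(\xi;\boldsymbol{\beta}))=\Bild(\boldsymbol{M}_{\Po}(\xi;\boldsymbol{\beta}))$ (via Lemma \ref{Lemma Rang Zusammenhang Informationsmatrix} or the invertible factor), whereas the paper makes the correspondence of solutions explicit by setting $\tilde{\boldsymbol{H}}=\tfrac{b}{a}(\boldsymbol{L}(\xi;\boldsymbol{\beta})^T)^{-1}\boldsymbol{H}$.
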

\begin{Lemma}\label{Lemma Verallgemeinerte Inverse Informationsmatrix Poisson-Gamma-Modell}
The matrix
\begin{align}
\boldsymbol{M}(\xi;\boldsymbol{\beta})^-=\frac{b}{a}\cdot \boldsymbol{M}_{\Po}(\xi;\boldsymbol{\beta})^-+\frac{m}{a}\cdot \boldsymbol{e}_1\boldsymbol{e}_1^T
\end{align}
is a generalized inverse of $\boldsymbol{M}(\xi;\boldsymbol{\beta})$ if and only if $\boldsymbol{M}_{\Po}(\xi;\boldsymbol{\beta})^-$ is a generalized inverse of $\boldsymbol{M}_{\Po}(\xi;\boldsymbol{\beta})$.
\end{Lemma}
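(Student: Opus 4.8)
The plan is to reduce the biconditional to the single defining relation of a generalized inverse and to exploit the rank-one structure of the correction term in \eqref{Darstellung Informationsmatrix Poisson-Gamma-Modell}. Throughout I write $\boldsymbol{M}=\boldsymbol{M}(\xi;\boldsymbol{\beta})$, $\boldsymbol{M}_{\Po}=\boldsymbol{M}_{\Po}(\xi;\boldsymbol{\beta})$, $\boldsymbol{u}=\boldsymbol{M}_{\Po}\boldsymbol{e}_1$, $c=\boldsymbol{e}_1^{T}\boldsymbol{M}_{\Po}\boldsymbol{e}_1$ and $t=c+\tfrac{b}{m}$, and I let $\boldsymbol{G}$ denote the matrix called $\boldsymbol{M}_{\Po}(\xi;\boldsymbol{\beta})^{-}$ in the statement, of which no property is assumed a priori. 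Since $b,m>0$ and $\boldsymbol{M}_{\Po}$ is positive semidefinite, $t>0$ and $0\le c/t<1$. Using that $\boldsymbol{M}_{\Po}$ is symmetric, formula \eqref{Darstellung Informationsmatrix Poisson-Gamma-Modell} reads $\boldsymbol{M}=\tfrac{a}{b}\boldsymbol{N}$ with the symmetric matrix $\boldsymbol{N}=\boldsymbol{M}_{\Po}-t^{-1}\boldsymbol{u}\boldsymbol{u}^{T}$, and the matrix in the statement is $\boldsymbol{G}^{\ast}=\tfrac{1}{a}(b\,\boldsymbol{G}+m\,\boldsymbol{e}_1\boldsymbol{e}_1^{T})$.

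First I would carry out a scalar-free reduction. Cancelling constants (all strictly positive), $\boldsymbol{M}\boldsymbol{G}^{\ast}\boldsymbol{M}=\boldsymbol{M}$ is equivalent to $\boldsymbol{N}\boldsymbol{G}\boldsymbol{N}+\tfrac{m}{b}\,\boldsymbol{N}\boldsymbol{e}_1\boldsymbol{e}_1^{T}\boldsymbol{N}=\boldsymbol{N}$. Since $\boldsymbol{N}\boldsymbol{e}_1=\tfrac{t-c}{t}\boldsymbol{u}$ and $t-c=\tfrac{b}{m}$, the second term equals $\tfrac{t-c}{t^{2}}\boldsymbol{u}\boldsymbol{u}^{T}$, so the condition to be analysed becomes
\begin{equation*}
\boldsymbol{N}\boldsymbol{G}\boldsymbol{N}=\boldsymbol{M}_{\Po}-\frac{2t-c}{t^{2}}\,\boldsymbol{u}\boldsymbol{u}^{T}.
\tag{$\ast$}
\end{equation*}
Since also $\boldsymbol{G}$ is a generalized inverse of $\boldsymbol{M}_{\Po}$ exactly when $\boldsymbol{M}_{\Po}\boldsymbol{G}\boldsymbol{M}_{\Po}=\boldsymbol{M}_{\Po}$, it remains to show that $(\ast)$ and $\boldsymbol{M}_{\Po}\boldsymbol{G}\boldsymbol{M}_{\Po}=\boldsymbol{M}_{\Po}$ are equivalent.

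For ``$\Leftarrow$'' I would assume $\boldsymbol{M}_{\Po}\boldsymbol{G}\boldsymbol{M}_{\Po}=\boldsymbol{M}_{\Po}$ and expand $\boldsymbol{N}\boldsymbol{G}\boldsymbol{N}=(\boldsymbol{M}_{\Po}-t^{-1}\boldsymbol{u}\boldsymbol{u}^{T})\boldsymbol{G}(\boldsymbol{M}_{\Po}-t^{-1}\boldsymbol{u}\boldsymbol{u}^{T})$. Writing $\boldsymbol{u}=\boldsymbol{M}_{\Po}\boldsymbol{e}_1$ and using the assumed identity collapses $\boldsymbol{M}_{\Po}\boldsymbol{G}\boldsymbol{u}=\boldsymbol{u}$, $\boldsymbol{u}^{T}\boldsymbol{G}\boldsymbol{M}_{\Po}=\boldsymbol{u}^{T}$ and $\boldsymbol{u}^{T}\boldsymbol{G}\boldsymbol{u}=c$; collecting the scalar multiples of $\boldsymbol{u}\boldsymbol{u}^{T}$ (namely $-t^{-1}-t^{-1}+ct^{-2}=-(2t-c)/t^{2}$) reproduces exactly the right-hand side of $(\ast)$, hence $\boldsymbol{M}\boldsymbol{G}^{\ast}\boldsymbol{M}=\boldsymbol{M}$.

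The converse ``$\Rightarrow$'' is the step I expect to be the main obstacle, since $\boldsymbol{M}_{\Po}\boldsymbol{G}\boldsymbol{M}_{\Po}=\boldsymbol{M}_{\Po}$ may no longer be used but must be derived. I would set $\boldsymbol{R}=\boldsymbol{M}_{\Po}\boldsymbol{G}\boldsymbol{M}_{\Po}$ and $\boldsymbol{D}=\boldsymbol{R}-\boldsymbol{M}_{\Po}$ and run the same expansion while keeping $\boldsymbol{R}$: with $\boldsymbol{u}=\boldsymbol{M}_{\Po}\boldsymbol{e}_1$ one gets $\boldsymbol{N}\boldsymbol{G}\boldsymbol{N}=\boldsymbol{R}-t^{-1}(\boldsymbol{R}\boldsymbol{e}_1)\boldsymbol{u}^{T}-t^{-1}\boldsymbol{u}(\boldsymbol{e}_1^{T}\boldsymbol{R})+t^{-2}(\boldsymbol{e}_1^{T}\boldsymbol{R}\boldsymbol{e}_1)\boldsymbol{u}\boldsymbol{u}^{T}$. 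Substituting this into $(\ast)$ and abbreviating $\boldsymbol{v}=\boldsymbol{D}\boldsymbol{e}_1$, $\boldsymbol{w}^{T}=\boldsymbol{e}_1^{T}\boldsymbol{D}$, $\delta=\boldsymbol{e}_1^{T}\boldsymbol{D}\boldsymbol{e}_1$ (note that $\boldsymbol{R}$ need not be symmetric, so $\boldsymbol{v}$ and $\boldsymbol{w}$ must be carried separately), the $\boldsymbol{u}\boldsymbol{u}^{T}$-coefficients collapse and leave
\begin{equation*}
\boldsymbol{D}=t^{-1}\boldsymbol{v}\boldsymbol{u}^{T}+t^{-1}\boldsymbol{u}\boldsymbol{w}^{T}-\delta\,t^{-2}\boldsymbol{u}\boldsymbol{u}^{T}.
\end{equation*}
Multiplying this identity from the right by $\boldsymbol{e}_1$ and from the left by $\boldsymbol{e}_1^{T}$, and using $\boldsymbol{u}^{T}\boldsymbol{e}_1=c$ together with $1-c/t\neq 0$, forces $\boldsymbol{v}=\tfrac{\delta}{t}\boldsymbol{u}$ and $\boldsymbol{w}^{T}=\tfrac{\delta}{t}\boldsymbol{u}^{T}$; sandwiching once more gives $\delta=\tfrac{c}{t}\delta$, hence $\delta=0$, hence $\boldsymbol{v}$ and $\boldsymbol{w}$ vanish, hence $\boldsymbol{D}=\boldsymbol{0}$. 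Thus $\boldsymbol{M}_{\Po}\boldsymbol{G}\boldsymbol{M}_{\Po}=\boldsymbol{M}_{\Po}$, which completes the equivalence. The computations inside the two expansions of $\boldsymbol{N}\boldsymbol{G}\boldsymbol{N}$ are routine bookkeeping of rank-one terms, which I would not spell out in full.
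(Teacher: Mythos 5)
Your proof is correct, but it takes a genuinely different route from the paper. The paper's argument rests on an auxiliary lemma in the appendix: with the regular matrix $\boldsymbol{L}=\boldsymbol{I}-\boldsymbol{M}_{\Po}\boldsymbol{e}_1\boldsymbol{e}_1^T/\bigl(\boldsymbol{e}_1^T\boldsymbol{M}_{\Po}\boldsymbol{e}_1+\tfrac{b}{m}\bigr)$ one has $\boldsymbol{M}=\tfrac{a}{b}\boldsymbol{L}\boldsymbol{M}_{\Po}=\tfrac{a}{b}\boldsymbol{M}_{\Po}\boldsymbol{L}^T$ and $\boldsymbol{L}=\boldsymbol{I}-\tfrac{m}{a}\boldsymbol{M}\boldsymbol{e}_1\boldsymbol{e}_1^T$, which yields in a few lines the single congruence identity $\boldsymbol{M}\boldsymbol{M}^-\boldsymbol{M}-\boldsymbol{M}=\tfrac{a}{b}\,\boldsymbol{L}\bigl(\boldsymbol{M}_{\Po}\boldsymbol{M}_{\Po}^-\boldsymbol{M}_{\Po}-\boldsymbol{M}_{\Po}\bigr)\boldsymbol{L}^T$; since $\boldsymbol{L}$ is invertible, both implications drop out simultaneously. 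You instead expand everything in terms of the rank-one correction $t^{-1}\boldsymbol{u}\boldsymbol{u}^T$ and treat the two directions separately: the backward direction is the routine collapse you describe, and for the forward direction you correctly recognize that $\boldsymbol{M}_{\Po}\boldsymbol{G}\boldsymbol{M}_{\Po}=\boldsymbol{M}_{\Po}$ must be forced rather than assumed, and your bookkeeping with $\boldsymbol{D}$, $\boldsymbol{v}$, $\boldsymbol{w}$, $\delta$ (including keeping $\boldsymbol{v}$ and $\boldsymbol{w}$ apart because $\boldsymbol{R}$ need not be symmetric, and using $c/t<1$ to divide) is sound and does drive $\boldsymbol{D}$ to zero. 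What the paper's approach buys is brevity and symmetry -- one sandwich identity, no case analysis, and the same matrix $\boldsymbol{L}$ is reused for the rank and identifiability results; what your approach buys is self-containedness, since it needs nothing beyond the representation \eqref{Darstellung Informationsmatrix Poisson-Gamma-Modell} and elementary rank-one algebra.
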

\begin{Remark}\label{Bemerkung Inverse Informationsmatrix Poisson-Gamma-Modell}
By Lemma \ref{Lemma Rang Zusammenhang Informationsmatrix} $\boldsymbol{M}(\xi;\boldsymbol{\beta})$ is regular if and only if $\boldsymbol{M}_{\Po}(\xi;\boldsymbol{\beta})$ is regular. In this case, the following relation for the inverses of the information matrices follows from Lemma \ref{Lemma Verallgemeinerte Inverse Informationsmatrix Poisson-Gamma-Modell}:
\begin{align}
\boldsymbol{M}(\xi;\boldsymbol{\beta})^{-1}=\frac{b}{a}\cdot \boldsymbol{M}_{\Po}(\xi;\boldsymbol{\beta})^{-1}+\frac{m}{a}\cdot \boldsymbol{e}_1\boldsymbol{e}_1^T.
\end{align}
\end{Remark}
For generalized linear models like the Poisson model the information matrix of a convex combination of designs is equal to the convex combination of the information matrices of these designs (cf.\ Fedorov, 1972, p.\ 66). Due to the random effect this does not hold for the Poisson-Gamma model. Since the information matrix can be represented as $\boldsymbol{M}(\xi;\boldsymbol{\beta})=\big(\boldsymbol{\tilde{M}}(\xi;\boldsymbol{\beta})^{-1}+\boldsymbol{D}\big)^{-1}$ with $\boldsymbol{\tilde{M}}(\xi;\boldsymbol{\beta})=\frac{a}{b}\cdot\boldsymbol{M}_{\Po}(\xi;\boldsymbol{\beta})$ and $\boldsymbol{D}=\frac{m}{a}\cdot \boldsymbol{e}_1\boldsymbol{e}_1^T$ not depending on the design $\xi$, the following result can be shown (cf.\ Schmelter (2007), Niaparast (2009)).
\begin{Theorem}\label{Satz Ungleichung Informationsmatrix}
Let $\xi_1$ and $\xi_2$ be two designs. Then the following inequality with respect to the Loewner order holds for all $\alpha\in\left[0,1\right]$:
\begin{align}
\boldsymbol{M}\bigl(\alpha\cdot\xi_1+(1-\alpha)\cdot\xi_2;\boldsymbol{\beta}\bigr)\geq\alpha\cdot\boldsymbol{M}(\xi_1;\boldsymbol{\beta})+(1-\alpha)\cdot\boldsymbol{M}(\xi_2;\boldsymbol{\beta}).
\end{align}
\end{Theorem}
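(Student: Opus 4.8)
The plan is to rewrite $\boldsymbol{M}(\xi;\boldsymbol{\beta})$ as a Schur complement of a block matrix whose entries depend affinely on the design, and then to invoke the Loewner convexity of the matrix fractional map. Throughout write $\bar{\xi}:=\alpha\cdot\xi_1+(1-\alpha)\cdot\xi_2$.

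First I would use that $\boldsymbol{M}_{\Po}(\cdot;\boldsymbol{\beta})$ is additive over convex combinations of designs (as recalled in the text, cf.\ Fedorov, 1972, p.\ 66), so that $\boldsymbol{M}_{\Po}(\bar{\xi};\boldsymbol{\beta})=\alpha\cdot\boldsymbol{M}_{\Po}(\xi_1;\boldsymbol{\beta})+(1-\alpha)\cdot\boldsymbol{M}_{\Po}(\xi_2;\boldsymbol{\beta})$, and hence the same holds for $\boldsymbol{\tilde{M}}(\xi;\boldsymbol{\beta})=\frac{a}{b}\cdot\boldsymbol{M}_{\Po}(\xi;\boldsymbol{\beta})$. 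Next, by rewriting \eqref{Darstellung Informationsmatrix Poisson-Gamma-Modell} (equivalently, by applying the Sherman--Morrison formula to $(\boldsymbol{\tilde{M}}(\xi;\boldsymbol{\beta})^{-1}+\boldsymbol{D})^{-1}$), I would establish the identity
\begin{align}
\boldsymbol{M}(\xi;\boldsymbol{\beta})=\boldsymbol{P}(\xi)-\boldsymbol{R}(\xi)\cdot Q(\xi)^{-1}\cdot\boldsymbol{R}(\xi)^T,
\end{align}
where $\boldsymbol{P}(\xi)=\boldsymbol{\tilde{M}}(\xi;\boldsymbol{\beta})$, $\boldsymbol{R}(\xi)=\boldsymbol{\tilde{M}}(\xi;\boldsymbol{\beta})\boldsymbol{e}_1$ and $Q(\xi)=\boldsymbol{e}_1^T\boldsymbol{\tilde{M}}(\xi;\boldsymbol{\beta})\boldsymbol{e}_1+\frac{a}{m}$. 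By the previous step $\boldsymbol{P}$ and $\boldsymbol{R}$ are linear in $\xi$ and $Q$ is affine in $\xi$; moreover $\boldsymbol{\tilde{M}}(\xi;\boldsymbol{\beta})$ is positive semidefinite, so $Q(\xi)\geq\frac{a}{m}>0$ for every $\xi$. A useful by-product is that this representation remains valid for designs with singular $\boldsymbol{M}_{\Po}(\xi;\boldsymbol{\beta})$, which lets us avoid any limiting argument.

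The key ingredient is the Loewner convexity of the matrix fractional map $(\boldsymbol{R},Q)\mapsto\boldsymbol{R}\cdot Q^{-1}\cdot\boldsymbol{R}^T$ on pairs with $Q>0$. I would derive it from the variational identity
\begin{align}
\boldsymbol{v}^T\boldsymbol{R}\cdot Q^{-1}\cdot\boldsymbol{R}^T\boldsymbol{v}=\sup_{u\in\mathbb{R}}\bigl(2\cdot u\cdot\boldsymbol{v}^T\boldsymbol{R}-u^2\cdot Q\bigr),
\end{align}
valid for every $\boldsymbol{v}\in\mathbb{R}^p$, the supremum being attained at $u=Q^{-1}\cdot\boldsymbol{R}^T\boldsymbol{v}$. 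Since the right-hand side is a pointwise supremum of functions that are affine in $(\boldsymbol{R},Q)$, it is convex in $(\boldsymbol{R},Q)$; as this holds for every $\boldsymbol{v}$, the matrix-valued map is convex with respect to the Loewner order (a standard fact; see, e.g., Boyd and Vandenberghe, 2004). Applying this at the pairs $(\boldsymbol{R}(\xi_i),Q(\xi_i))$ and using $\boldsymbol{R}(\bar{\xi})=\alpha\cdot\boldsymbol{R}(\xi_1)+(1-\alpha)\cdot\boldsymbol{R}(\xi_2)$ and $Q(\bar{\xi})=\alpha\cdot Q(\xi_1)+(1-\alpha)\cdot Q(\xi_2)$ yields
\begin{align}
\boldsymbol{R}(\bar{\xi})\cdot Q(\bar{\xi})^{-1}\cdot\boldsymbol{R}(\bar{\xi})^T\leq\alpha\cdot\boldsymbol{R}(\xi_1)Q(\xi_1)^{-1}\boldsymbol{R}(\xi_1)^T+(1-\alpha)\cdot\boldsymbol{R}(\xi_2)Q(\xi_2)^{-1}\boldsymbol{R}(\xi_2)^T
\end{align}
in the Loewner order.

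Finally, subtracting this inequality from the identity $\boldsymbol{P}(\bar{\xi})=\alpha\cdot\boldsymbol{P}(\xi_1)+(1-\alpha)\cdot\boldsymbol{P}(\xi_2)$ and using the Schur-complement representation of $\boldsymbol{M}$ three times gives $\boldsymbol{M}(\bar{\xi};\boldsymbol{\beta})\geq\alpha\cdot\boldsymbol{M}(\xi_1;\boldsymbol{\beta})+(1-\alpha)\cdot\boldsymbol{M}(\xi_2;\boldsymbol{\beta})$, which is the assertion. I expect the main obstacle to be the bookkeeping in the second step: one has to check carefully that $\boldsymbol{P}-\boldsymbol{R}Q^{-1}\boldsymbol{R}^T$ really coincides with \eqref{Darstellung Informationsmatrix Poisson-Gamma-Modell} as written (in particular that the scalar entering $Q$ is $\frac{a}{m}$, and that the formula stays meaningful when $\boldsymbol{M}_{\Po}(\xi;\boldsymbol{\beta})$ is singular); the remaining steps are routine linear algebra together with a textbook convexity lemma. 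An alternative would be to prove directly that $\boldsymbol{N}\mapsto(\boldsymbol{N}^{-1}+\boldsymbol{D})^{-1}$ is operator concave on positive definite $\boldsymbol{N}$ and then pass to the boundary, but the Schur-complement route is more self-contained and sidesteps the regularity issue.
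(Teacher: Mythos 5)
Your argument is correct, but it follows a different and more self-contained route than the paper, which does not prove the theorem itself: it only notes the representation $\boldsymbol{M}(\xi;\boldsymbol{\beta})=\bigl(\boldsymbol{\tilde{M}}(\xi;\boldsymbol{\beta})^{-1}+\boldsymbol{D}\bigr)^{-1}$ with $\boldsymbol{D}=\frac{m}{a}\cdot\boldsymbol{e}_1\boldsymbol{e}_1^T$ and defers to Schmelter (2007) and Niaparast (2009), i.e.\ essentially to the operator concavity of $\boldsymbol{N}\mapsto(\boldsymbol{N}^{-1}+\boldsymbol{D})^{-1}$ together with the linearity of $\boldsymbol{M}_{\Po}(\cdot;\boldsymbol{\beta})$ in the design (this is exactly the "alternative" you mention at the end). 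Your bookkeeping in the Schur-complement step is right: with $\boldsymbol{\tilde{M}}=\frac{a}{b}\boldsymbol{M}_{\Po}$, equation \eqref{Darstellung Informationsmatrix Poisson-Gamma-Modell} rearranges to $\boldsymbol{M}(\xi;\boldsymbol{\beta})=\boldsymbol{\tilde{M}}-\boldsymbol{\tilde{M}}\boldsymbol{e}_1\bigl(\boldsymbol{e}_1^T\boldsymbol{\tilde{M}}\boldsymbol{e}_1+\frac{a}{m}\bigr)^{-1}\boldsymbol{e}_1^T\boldsymbol{\tilde{M}}$, so indeed $Q(\xi)=\boldsymbol{e}_1^T\boldsymbol{\tilde{M}}(\xi;\boldsymbol{\beta})\boldsymbol{e}_1+\frac{a}{m}\geq\frac{a}{m}>0$, $\boldsymbol{P}$ and $\boldsymbol{R}$ are linear and $Q$ affine in $\xi$, and the variational identity $\boldsymbol{v}^T\boldsymbol{R}Q^{-1}\boldsymbol{R}^T\boldsymbol{v}=\sup_{u\in\mathbb{R}}\bigl(2u\,\boldsymbol{v}^T\boldsymbol{R}-u^2 Q\bigr)$ gives joint Loewner convexity of $(\boldsymbol{R},Q)\mapsto\boldsymbol{R}Q^{-1}\boldsymbol{R}^T$ as a pointwise supremum of affine functions; subtracting from the exact identity for $\boldsymbol{P}$ then yields the claimed superadditivity. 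What your route buys is that it never inverts $\boldsymbol{\tilde{M}}$, so it covers designs with singular $\boldsymbol{M}_{\Po}(\xi;\boldsymbol{\beta})$ without any limiting or pseudo-inverse argument, and it is elementary (one scalar quadratic maximization) rather than relying on a cited operator-concavity result; what the paper's cited route buys is brevity and the fact that it applies verbatim to a general positive semidefinite $\boldsymbol{D}$, whereas your Schur-complement form exploits that $\boldsymbol{D}$ is rank one (though the same sup-representation with vector $u$ would extend it).
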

A criterion function $\Phi$ is called isotonic if $\Phi\left(\boldsymbol{M}_1\right)\geq\Phi\left(\boldsymbol{M}_2\right)$ holds for all positive semidefinite matrices $\boldsymbol{M}_1\geq\boldsymbol{M}_2$. We note that each optimality criterion under consideration can be transformed into a maximization problem with an isotonic criterion function (cf.\ Pronzato and Pázman, 2013, p.\ 114, 118).
\begin{Corollary}\label{Korollar Konstruktion besseres Design}
Let $\zeta=\genfrac{\{}{.}{0pt}{}{\xi_1}{q_1}\genfrac{}{}{0pt}{}{\ldots}{\ldots}\genfrac{.}{\}}{0pt}{}{\xi_r}{q_r}$ be an arbitrary population design. Then $\boldsymbol{M}(\tilde{\zeta};\boldsymbol{\beta})\geq\boldsymbol{M}(\zeta;\boldsymbol{\beta})$ holds for the population design $\tilde{\zeta}=\genfrac{\{}{\}}{0pt}{}{\xi}{1}$, which assigns weight 1 to the individual design $\xi=\sum_{i=1}^r q_i\xi_i$. Hence, for an isotonic optimality criterion $\Phi\bigl(\boldsymbol{M}(\tilde{\zeta};\boldsymbol{\beta})\bigr)\geq\Phi\bigl(\boldsymbol{M}(\zeta;\boldsymbol{\beta})\bigr)$ holds.
\end{Corollary}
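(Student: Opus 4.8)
The plan is to reduce the statement to Theorem~\ref{Satz Ungleichung Informationsmatrix} by induction on the number $r$ of individual designs appearing in $\zeta$, and afterwards to invoke isotonicity of $\Phi$. By definition of the information matrix of a population design we have $\boldsymbol{M}(\zeta;\boldsymbol{\beta})=\sum_{i=1}^r q_i\boldsymbol{M}(\xi_i;\boldsymbol{\beta})$, whereas $\boldsymbol{M}(\tilde{\zeta};\boldsymbol{\beta})=\boldsymbol{M}(\xi;\boldsymbol{\beta})$ for the single individual design $\xi=\sum_{i=1}^r q_i\xi_i$, which again has finite support with weights summing to one and hence lies in $\Xi$. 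Thus the Loewner inequality to be established reads
\begin{align}
\boldsymbol{M}\!\left(\sum_{i=1}^r q_i\xi_i;\boldsymbol{\beta}\right)\geq\sum_{i=1}^r q_i\,\boldsymbol{M}(\xi_i;\boldsymbol{\beta}).
\end{align}

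First I would dispose of the degenerate situations: if some weight equals $1$ (so all others vanish), or if $r=1$, both sides coincide, and the case $r=2$ is precisely Theorem~\ref{Satz Ungleichung Informationsmatrix}. For the inductive step assume the inequality holds for every population design built from at most $r-1$ individual designs, and let $\zeta$ have $r$ components with, without loss of generality, $0<q_1<1$. Setting $\alpha=q_1$ and $\xi'=\sum_{i=2}^r \frac{q_i}{1-q_1}\xi_i$ we have $\xi=\alpha\xi_1+(1-\alpha)\xi'$, so Theorem~\ref{Satz Ungleichung Informationsmatrix} yields $\boldsymbol{M}(\xi;\boldsymbol{\beta})\geq\alpha\,\boldsymbol{M}(\xi_1;\boldsymbol{\beta})+(1-\alpha)\,\boldsymbol{M}(\xi';\boldsymbol{\beta})$. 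The weights $q_i/(1-q_1)$, $i=2,\ldots,r$, are nonnegative and sum to one, so the induction hypothesis applied to $\xi'$ gives $\boldsymbol{M}(\xi';\boldsymbol{\beta})\geq\sum_{i=2}^r \frac{q_i}{1-q_1}\boldsymbol{M}(\xi_i;\boldsymbol{\beta})$. Since the Loewner order is preserved under multiplication by a nonnegative scalar and under addition, combining the two inequalities produces $\boldsymbol{M}(\xi;\boldsymbol{\beta})\geq\sum_{i=1}^r q_i\,\boldsymbol{M}(\xi_i;\boldsymbol{\beta})=\boldsymbol{M}(\zeta;\boldsymbol{\beta})$, which completes the induction and hence proves $\boldsymbol{M}(\tilde{\zeta};\boldsymbol{\beta})\geq\boldsymbol{M}(\zeta;\boldsymbol{\beta})$.

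Finally, both $\boldsymbol{M}(\tilde{\zeta};\boldsymbol{\beta})$ and $\boldsymbol{M}(\zeta;\boldsymbol{\beta})$ are positive semidefinite, so the defining property of an isotonic criterion function immediately gives $\Phi\bigl(\boldsymbol{M}(\tilde{\zeta};\boldsymbol{\beta})\bigr)\geq\Phi\bigl(\boldsymbol{M}(\zeta;\boldsymbol{\beta})\bigr)$, as claimed. I do not anticipate a genuine obstacle here; the only points requiring a little care are the bookkeeping in the induction (renormalising the weights $q_i/(1-q_1)$ and separating off the cases $q_1\in\{0,1\}$) and the observation, already implicit in Theorem~\ref{Satz Ungleichung Informationsmatrix}, that $\xi=\sum_{i=1}^r q_i\xi_i$ is itself a legitimate element of $\Xi$, so that $\boldsymbol{M}(\xi;\boldsymbol{\beta})$ and $\boldsymbol{M}(\tilde{\zeta};\boldsymbol{\beta})$ are well defined and agree.
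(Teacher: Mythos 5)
Your argument is correct and follows the route the paper intends: the corollary is an immediate consequence of Theorem \ref{Satz Ungleichung Informationsmatrix}, extended from two to $r$ individual designs by the standard induction you carry out, followed by the isotonicity of $\Phi$. The paper leaves this extension implicit (citing Schmelter, 2007), so your write-up simply fills in that bookkeeping correctly.
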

Since by Corollary \ref{Korollar Konstruktion besseres Design} an optimal individual design $\xi^{\ast}$ yields an optimal population design $\tilde{\zeta}^{\ast}$, which uses $\xi^{\ast}$ for all statistical units (cf.\ Schmelter, 2007), we can restrict ourselves to the determination of optimal individual designs.\\
An important tool to prove the optimality of a design is the equivalence theorem. With Theorem \ref{Satz Ungleichung Informationsmatrix} it follows that for an isotonic and concave optimality criterion $\Phi$, such as $\Phi(\cdot)=\log(\det(\cdot))$ for $D$-optimality, the function $\Psi(\xi)=\Phi(\boldsymbol{M}(\xi;\boldsymbol{\beta}))$ is also concave on $\Xi$, which is a necessary condition for deriving equivalence theorems. That for $D$-optimality is stated in the following theorem (cf.\ Fedorov and Hackl, 1997, p.\ 78).
\begin{Theorem}[Equivalence Theorem]\label{Satz Äquivalenzsatz}
Let the information matrix be given by $\boldsymbol{M}(\xi;\boldsymbol{\beta})=(\boldsymbol{\tilde{M}}(\xi;\boldsymbol{\beta})^{-1}+\boldsymbol{D})^{-1}$, where $\boldsymbol{\tilde{M}}(\xi;\boldsymbol{\beta})=\sum_{j=1}^l w_j\cdot\lambda\bigl(\boldsymbol{f}(\boldsymbol{x}_j)^T\boldsymbol{\beta}\bigr)\cdot\boldsymbol{f}(\boldsymbol{x}_j)\boldsymbol{f}(\boldsymbol{x}_j)^T$. A design $\xi^{\ast}$ is $D$-optimal if and only if
\begin{align}
\lambda\bigl(\boldsymbol{f}(\boldsymbol{x})^T\boldsymbol{\beta}\bigr)\cdot \boldsymbol{f}(\boldsymbol{x})^T\boldsymbol{\tilde{M}}(\xi^{\ast};\boldsymbol{\beta})^{-1}\boldsymbol{M}(\xi^{\ast};\boldsymbol{\beta})\boldsymbol{\tilde{M}}(\xi^{\ast};\boldsymbol{\beta})^{-1}\boldsymbol{f}(\boldsymbol{x})\leq\tr\bigl(\boldsymbol{M}(\xi^{\ast};\boldsymbol{\beta})\boldsymbol{\tilde{M}}(\xi^{\ast};\boldsymbol{\beta})^{-1}\bigr)\notag
\end{align}
for all $\boldsymbol{x}\in\mathscr{X}$. At the support points of $\xi^{\ast}$ equality holds.
\end{Theorem}
The quality of a design $\xi$ can be measured by its efficiency, which is the ratio of the values of the homogeneous version of the criterion function for $\xi$ and for the optimal design $\xi^{\ast}$. For example, $\eff_D(\xi;\boldsymbol{\beta})=\left[\det\bigl(\boldsymbol{M}(\xi;\boldsymbol{\beta})\bigr)/\det\bigl(\boldsymbol{M}(\xi^{\ast};\boldsymbol{\beta})\bigr)\right]^{1/p}$ is the $D$-efficiency and $\eff_{D_A}(\xi;\boldsymbol{\beta})=\left[\det\bigl(\boldsymbol{A}^T \boldsymbol{M}(\xi^\ast;\boldsymbol{\beta})^{-}\boldsymbol{A}\bigr)/\det\bigl(\boldsymbol{A}^T \boldsymbol{M}(\xi;\boldsymbol{\beta})^{-} \boldsymbol{A}\bigr)\right]^{1/s}$ is the $D_A$-efficiency (cf.\ Atkinson et al., 2007, p.\ 151).

\section{$D$-optimal designs} \label{Abschnitt D-optimale Designs}

First, using Remark \ref{Bemerkung Inverse Informationsmatrix Poisson-Gamma-Modell} we determine the criterion function for $D$-optimality and establish a relation between $D$-optimality for the Poisson and the Poisson-Gamma model.
\begin{Theorem}\label{Satz Kriteriumsfunktion D-Optimalität Poisson-Gamma-Modell}
The $D$-optimality criterion function for the Poisson-Gamma model is given by
\begin{align}
\det\bigl(\boldsymbol{M}(\xi;\boldsymbol{\beta})\bigr)=\left(\frac{a}{b}\right)^p\cdot\frac{\det\bigl(\boldsymbol{M}_{\Po}(\xi;\boldsymbol{\beta})\bigr)}{1+\frac{m}{b}\cdot\boldsymbol{e}_1^T\boldsymbol{M}_{\Po}(\xi;\boldsymbol{\beta})\boldsymbol{e}_1}.\label{Determinante Informationsmatrix Poisson-Gamma-Modell}
\end{align}
\end{Theorem}
The maximization of $\det\bigl(\boldsymbol{M}(\xi;\boldsymbol{\beta})\bigr)$ is equivalent to the minimization of the inverse determinant, which is given by
\begin{align}
\det\bigl(\boldsymbol{M}(\xi;\boldsymbol{\beta})\bigr)^{-1}=\left(\frac{b}{a}\right)^p\cdot\left(\det\bigl(\boldsymbol{M}_{\Po}(\xi;\boldsymbol{\beta})\bigr)^{-1}+\frac{m}{b}\cdot\frac{\boldsymbol{e}_1^T\boldsymbol{M}_{\Po}(\xi;\boldsymbol{\beta})\boldsymbol{e}_1}{\det\bigl(\boldsymbol{M}_{\Po}(\xi;\boldsymbol{\beta})\bigr)}\right).\notag
\end{align}
The criterion function for $D_s$-optimality for the parameters $\beta_1,\ldots,\beta_{p-1}$, that is with $\boldsymbol{A}^T=(\boldsymbol{0}_{p-1},\boldsymbol{I}_{p-1})$, for the Poisson model is given by (cf.\ Atkinson et al., 2007, p.\ 139):
\begin{align}
\det\bigl(\boldsymbol{A}^T \boldsymbol{M}_{\Po}(\xi;\boldsymbol{\beta})^{-1} \boldsymbol{A}\bigr)=\frac{\boldsymbol{e}_1^T\boldsymbol{M}_{\Po}(\xi;\boldsymbol{\beta})\boldsymbol{e}_1}{\det\bigl(\boldsymbol{M}_{\Po}(\xi;\boldsymbol{\beta})\bigr)}.\notag
\end{align}
Thus we obtain the following relation:
\begin{align}
\max_{\xi\in\Xi}\left\{\det\bigl(\boldsymbol{M}(\xi;\boldsymbol{\beta})\bigr)\right\}\;\;\Leftrightarrow\;\;\min_{\xi\in\Xi}\left\{\det\bigl(\boldsymbol{M}_{\Po}(\xi;\boldsymbol{\beta})\bigr)^{-1}+\frac{m}{b}\cdot\det\bigl(\boldsymbol{A}^T\boldsymbol{M}_{\Po}(\xi;\boldsymbol{\beta})^{-1}\boldsymbol{A}\bigr)\right\}.\notag
\end{align}
Hence, the $D$-optimality criterion for the Poisson-Gamma model is equivalent to a combined weighted optimality criterion of $D$-optimality and $D_s$-optimality for $\beta_1,\ldots,\beta_{p-1}$ for the Poisson model.\\
In the following, we consider the multiple regression model with regression function $\boldsymbol{f}(\boldsymbol{x})=(1,\boldsymbol{x}^T)^T$, where $\boldsymbol{x}\in\mathbb{R}^{p-1}$, and parameter vector $\boldsymbol{\beta}=(\beta_0,\beta_1,\ldots,\beta_{p-1})^T$. The next theorem provides the $D$-optimal weights for a design with minimal support.
\begin{Theorem}\label{Satz D-optimale Gewichte Poisson-Gamma-Modell}
Let $\boldsymbol{x}_1,\ldots,\boldsymbol{x}_p$ be linearly independent support points of a design $\xi$, where $\boldsymbol{x}_1,\ldots,\boldsymbol{x}_{p-1}$ are located on a hyperplane $\left\{\boldsymbol{x}\in\mathbb{R}^{p-1}:\boldsymbol{f}(\boldsymbol{x})^T\boldsymbol{\beta}=c\right\}$. For $\boldsymbol{x}_p$ let the inequality $\boldsymbol{f}(\boldsymbol{x}_p)^T\boldsymbol{\beta}>c$ hold. Then the $D$-optimal weight for $\boldsymbol{x}_p$ is given by
\begin{align}
w_p^{\ast}=\frac{2}{p+\sqrt{(p-2)^2+4\cdot(p-1)\cdot\frac{\textstyle1+\frac{m}{b}\cdot\exp(\boldsymbol{f}(\boldsymbol{x}_p)^T\boldsymbol{\beta})}{\textstyle1+\frac{m}{b}\cdot\exp(c)}}}.
\end{align}
The $D$-optimal weights for $\boldsymbol{x}_1,\ldots,\boldsymbol{x}_{p-1}$ are given by $w_1^{\ast}=\ldots=w_{p-1}^{\ast}=(1-w_p^{\ast})/(p-1)$. The $D$-optimal weights satisfy the inequality:
\begin{align}
0<w_p^{\ast}<\frac{1}{p}<w_1^{\ast}=\ldots=w_{p-1}^{\ast}<\frac{1}{p-1}.
\end{align}
\end{Theorem}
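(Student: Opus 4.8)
The plan is to substitute a design $\xi$ supported on $\boldsymbol{x}_1,\ldots,\boldsymbol{x}_p$ with weights $w_1,\ldots,w_p$ into the $D$-criterion \eqref{Determinante Informationsmatrix Poisson-Gamma-Modell} and to optimize it by hand. Collecting the regression vectors in $\boldsymbol{X}=(\boldsymbol{f}(\boldsymbol{x}_1),\ldots,\boldsymbol{f}(\boldsymbol{x}_p))^T$, which is a regular $p\times p$ matrix by the assumed linear independence, one has $\boldsymbol{M}_{\Po}(\xi;\boldsymbol{\beta})=\boldsymbol{X}^T\boldsymbol{W}\boldsymbol{\Lambda}\boldsymbol{X}$, hence $\det\bigl(\boldsymbol{M}_{\Po}(\xi;\boldsymbol{\beta})\bigr)=\det(\boldsymbol{X})^2\prod_{j=1}^p w_j\exp\bigl(\boldsymbol{f}(\boldsymbol{x}_j)^T\boldsymbol{\beta}\bigr)$, and since the first coordinate of $\boldsymbol{f}$ equals $1$ also $\boldsymbol{e}_1^T\boldsymbol{M}_{\Po}(\xi;\boldsymbol{\beta})\boldsymbol{e}_1=\sum_{j=1}^p w_j\exp\bigl(\boldsymbol{f}(\boldsymbol{x}_j)^T\boldsymbol{\beta}\bigr)$. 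Writing $\mu=\exp(c)$, $\mu_p=\exp\bigl(\boldsymbol{f}(\boldsymbol{x}_p)^T\boldsymbol{\beta}\bigr)$ and using $\boldsymbol{f}(\boldsymbol{x}_j)^T\boldsymbol{\beta}=c$ for $j\leq p-1$, this quadratic form equals $\mu(1-w_p)+w_p\mu_p$, so that after dropping the positive constants $(a/b)^p$, $\det(\boldsymbol{X})^2$ and $\mu^{p-1}\mu_p$ maximizing \eqref{Determinante Informationsmatrix Poisson-Gamma-Modell} reduces to maximizing
\begin{align}
g(w_1,\ldots,w_p)=\frac{\prod_{j=1}^p w_j}{\alpha+\gamma\,w_p},\qquad \alpha=1+\tfrac{m}{b}\mu,\quad \gamma=\tfrac{m}{b}(\mu_p-\mu), \notag
\end{align}
over the probability simplex, where $\alpha>0$ and $\gamma>0$ because $\boldsymbol{f}(\boldsymbol{x}_p)^T\boldsymbol{\beta}>c$.

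Since $g$ vanishes whenever some $w_j=0$ and is positive in the interior of the simplex, an optimal design has all weights positive. For fixed $w_p$ the denominator of $g$ is constant, so by the arithmetic--geometric mean inequality $\prod_{j=1}^{p-1}w_j$ is maximal precisely at $w_1=\cdots=w_{p-1}=(1-w_p)/(p-1)$, which gives the claimed form of $w_1^*,\ldots,w_{p-1}^*$. Substituting and dropping the constant factor $(p-1)^{-(p-1)}$, it remains to maximize $h(t)=t(1-t)^{p-1}/(\alpha+\gamma t)$ for $t=w_p\in(0,1)$. As $h$ is continuous, positive on $(0,1)$ and vanishes at both endpoints, its maximum is attained at an interior critical point; differentiating $\log h$ and clearing the positive denominators shows that any such point satisfies the quadratic $(p-1)\gamma\,t^2+p\alpha\,t-\alpha=0$, whose root product $-\alpha/((p-1)\gamma)$ is negative, so it has exactly one positive root, which therefore equals $w_p^*$ and lies in $(0,1)$.

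Dividing this quadratic by $\alpha$, putting $r=\gamma/\alpha$, solving for the positive root and rationalizing the numerator gives $w_p^*=2/\bigl(p+\sqrt{p^2+4(p-1)r}\bigr)$. The elementary identity $(p-2)^2+4(p-1)(1+r)=p^2+4(p-1)r$ together with $1+r=(1+\tfrac{m}{b}\mu_p)/(1+\tfrac{m}{b}\mu)$ recasts this as exactly the closed form in the statement. Finally, $r>0$ and $p\geq2$ yield $\sqrt{p^2+4(p-1)r}>p$, hence $0<w_p^*<1/p$; since $w_1^*=(1-w_p^*)/(p-1)$, the bounds $1/p<w_1^*=\cdots=w_{p-1}^*<1/(p-1)$ follow at once.

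The only genuine work I expect is the bookkeeping in the reduction to the single variable $w_p$, the sign analysis of the resulting quadratic, and the verification of the algebraic identity that brings its positive root into the stated form; the concavity results of Section \ref{Abschnitt Optimales Design} are not needed here, because on a fixed minimal support the determinant is an explicit differentiable function of the weights.
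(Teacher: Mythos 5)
Your proposal is correct and follows essentially the same route as the paper: on the fixed minimal support the determinant criterion reduces to $\prod_j w_j$ divided by a linear function of $w_p$, the first $p-1$ weights are equalized for fixed $w_p$, and the stationarity condition yields the same quadratic (your $(p-1)\gamma t^2+p\alpha t-\alpha=0$ is the paper's equation in $a_1=\alpha$, $a_p=\alpha+\gamma$), whose unique admissible root gives $w_p^{\ast}$ and the stated bounds. The only cosmetic difference is your parametrization via $r=\gamma/\alpha$ and the explicit identity $(p-2)^2+4(p-1)(1+r)=p^2+4(p-1)r$, which the paper absorbs into writing the root directly in terms of $a_p/a_1$.
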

\begin{Remark}\label{Bemerkung Design}
We consider the rectangular design region $\mathscr{X}=\left[u_1,v_1\right]\times\ldots\times\left[u_{p-1},v_{p-1}\right]$ and $\beta_i\neq0$ for $i=1,\ldots,p-1$. Let $\boldsymbol{d}=(d_1,\ldots,d_{p-1})$ with $d_i=v_i$ for $\beta_i>0$ and $d_i=u_i$ for $\beta_i<0$. Let $\boldsymbol{e}_i\in\mathbb{R}^{p-1}$ denote the $i$-th standard unit vector and let $z>0$. A design with one support point at the vertex $\boldsymbol{d}$ of $\mathscr{X}$ and further support points $\boldsymbol{d}-(z/\beta_1)\cdot \boldsymbol{e}_1,\ldots,\boldsymbol{d}-(z/\beta_{p-1})\cdot \boldsymbol{e}_{p-1}$ on the edges satisfies the conditions of Theorem~\ref{Satz D-optimale Gewichte Poisson-Gamma-Modell} with $c=\boldsymbol{f}(\boldsymbol{d})^T\boldsymbol{\beta}-z$. Thus the $D$-optimal weights $w_1^{\ast}(z),\ldots,w_p^{\ast}(z)$ from Theorem~\ref{Satz D-optimale Gewichte Poisson-Gamma-Modell} depend on the distance $z$ to the vertex $\boldsymbol{d}$.
\end{Remark}
\begin{Theorem}\label{Satz D-optimale Design Poisson-Gamma-Modell}
Let $\mathscr{X}=\left[u_1,v_1\right]\times\ldots\times\left[u_{p-1},v_{p-1}\right]$ and $\beta_i\neq0$ for $i=1,\ldots,p-1$. Let $\boldsymbol{d}=(d_1,\ldots,d_{p-1})$ with $d_i=v_i$ for $\beta_i>0$ and $d_i=u_i$ for $\beta_i<0$. For any $z>0$ let $w_1^{\ast}(z)$ and $w_p^{\ast}(z)$ be the $D$-optimal weights of Remark \ref{Bemerkung Design}. The equation
\begin{align}
0&=m\cdot\left((p-1)\cdot w_1^{\ast}(z)\cdot e^{\boldsymbol{f}(\boldsymbol{d})^T\boldsymbol{\beta}-z}+w_p^{\ast}(z)\cdot e^{\boldsymbol{f}(\boldsymbol{d})^T\boldsymbol{\beta}}\right)\cdot\big(z\cdot(p-1)\cdot w_1^{\ast}(z)-2\big)\notag\\&\;\;\;\;+b\cdot\left(z\cdot p\cdot w_1^{\ast}(z)-2\right)\label{Gleichung Poisson-Gamma-Modell}
\end{align}
has a unique solution $z^{\ast}$ in the interval $(0,\infty)$. If $z^{\ast}\leq\min_{i=1,\ldots,p-1}\bigl(\left|\beta_i\right|\cdot(v_i-u_i)\bigr)$, then the $D$-optimal design $\xi^{\ast}$ is given by
\begin{align}
\xi^{\ast}=\begin{Bmatrix}\boldsymbol{d}-(z^{\ast}/\beta_1)\cdot \boldsymbol{e}_1 & \ldots & \boldsymbol{d}-(z^{\ast}/\beta_{p-1})\cdot \boldsymbol{e}_{p-1} & \boldsymbol{d}\\[2pt]w_1^{\ast}(z^{\ast}) & \ldots & w_{p-1}^{\ast}(z^{\ast}) & w_p^{\ast}(z^{\ast})\end{Bmatrix}.
\end{align}
\end{Theorem}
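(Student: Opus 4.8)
The plan is to prove two things: that equation~\eqref{Gleichung Poisson-Gamma-Modell} has a unique root $z^{\ast}\in(0,\infty)$, and that the resulting design $\xi^{\ast}=\xi_{z^{\ast}}$ is $D$-optimal, which I would establish with the Equivalence Theorem~\ref{Satz �quivalenzsatz}. At the outset I would pass to the standardized coordinates $t_i=\beta_i(x_i-d_i)$, which turn $\mathscr X$ into the box $\prod_{i=1}^{p-1}[-\delta_i,0]$ with $\delta_i=|\beta_i|(v_i-u_i)$, the linear predictor into $\boldsymbol f(\boldsymbol d)^T\boldsymbol\beta+\sum_{i}t_i$, and the candidate support points into $\boldsymbol 0$ and $-z\boldsymbol e_i$. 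As this is an affine reparametrization of the regression functions fixing the intercept direction $\boldsymbol e_1$, the information matrices of both models transform consistently (compare~\eqref{Darstellung Informationsmatrix Poisson-Gamma-Modell}) and $D$-optimality is preserved, so it suffices to argue in these coordinates.

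For the design $\xi_z$ of Remark~\ref{Bemerkung Design} equipped with the $D$-optimal weights of Theorem~\ref{Satz D-optimale Gewichte Poisson-Gamma-Modell}, a block computation gives $\boldsymbol M_{\Po}(\xi_z;\boldsymbol\beta)$ explicitly, hence via the determinant formula~\eqref{Determinante Informationsmatrix Poisson-Gamma-Modell} the value $\det\boldsymbol M(\xi_z;\boldsymbol\beta)$ as a function of $z$ alone. I would then show: $z\mapsto\log\det\boldsymbol M(\xi_z;\boldsymbol\beta)\to-\infty$ both as $z\to0^{+}$ and as $z\to\infty$, so a maximizer exists; the weights being optimal for every $z$, the envelope theorem lets me differentiate with the weights held fixed; and the resulting stationarity condition in $z$, after using the weight formula of Theorem~\ref{Satz D-optimale Gewichte Poisson-Gamma-Modell} to eliminate the common exponential factor, is exactly~\eqref{Gleichung Poisson-Gamma-Modell}. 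For uniqueness I would substitute $w_p^{\ast}(z)$, or equivalently the strictly increasing quantity $R(z)=\bigl(1+\tfrac{m}{b}e^{\boldsymbol f(\boldsymbol d)^T\boldsymbol\beta}\bigr)\big/\bigl(1+\tfrac{m}{b}e^{\boldsymbol f(\boldsymbol d)^T\boldsymbol\beta-z}\bigr)$, into~\eqref{Gleichung Poisson-Gamma-Modell} and show that, after multiplication by a positive factor, the resulting expression is strictly monotone, so it has a single zero $z^{\ast}$.

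To show $\xi^{\ast}=\xi_{z^{\ast}}$ is globally $D$-optimal I use the Equivalence Theorem. From~\eqref{Darstellung Informationsmatrix Poisson-Gamma-Modell}, Remark~\ref{Bemerkung Inverse Informationsmatrix Poisson-Gamma-Modell} and the Sherman--Morrison formula, its sensitivity function simplifies to $\psi_\xi(\boldsymbol x)=d_\xi(\boldsymbol x)-\tfrac{m}{b}\,e^{\boldsymbol f(\boldsymbol x)^T\boldsymbol\beta}/\bigl(1+\tfrac{m}{b}n_\xi\bigr)$, with the bound on its right-hand side equal to $p-\tfrac{m}{b}n_\xi/\bigl(1+\tfrac{m}{b}n_\xi\bigr)$, where $d_\xi(\boldsymbol x)=e^{\boldsymbol f(\boldsymbol x)^T\boldsymbol\beta}\boldsymbol f(\boldsymbol x)^T\boldsymbol M_{\Po}(\xi;\boldsymbol\beta)^{-1}\boldsymbol f(\boldsymbol x)$ is the ordinary Poisson sensitivity function and $n_\xi=\boldsymbol e_1^T\boldsymbol M_{\Po}(\xi;\boldsymbol\beta)\boldsymbol e_1$. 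At the support points $\boldsymbol 0,-z^{\ast}\boldsymbol e_1,\dots,-z^{\ast}\boldsymbol e_{p-1}$ equality holds automatically, being equivalent to the optimality of the weights in Theorem~\ref{Satz D-optimale Gewichte Poisson-Gamma-Modell}. For the inequality on all of $\mathscr X$ I would use that, with $\boldsymbol M_{\Po}(\xi^{\ast};\boldsymbol\beta)^{-1}$ written out, $\psi_{\xi^{\ast}}$ depends on $\boldsymbol t$ only through $u=\sum_{i}t_i$ and $\sum_{i}t_i^{2}$ and is increasing in the latter; since $z^{\ast}\le\min_i\delta_i$, for the relevant range of $u$ the maximum of $\sum_{i}t_i^{2}$ subject to $\sum_{i}t_i=u$ and $-\delta_i\le t_i\le0$ is attained at a point $u\boldsymbol e_i$, which reduces the claim to a one-variable inequality along the edges of the box through $\boldsymbol d$. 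That one-variable function equals the bound at $u=0$ and at $u=-z^{\ast}$, is stationary at $u=-z^{\ast}$ precisely because of~\eqref{Gleichung Poisson-Gamma-Modell}, and one checks it stays below the bound on the whole interval $[-\delta_i,0]$, where $z^{\ast}\le\min_i\delta_i$ enters once more; the Equivalence Theorem then yields the assertion.

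The two places I expect to need genuine work are the monotonicity argument that makes $z^{\ast}$ unique and, in the last step, controlling $\psi_{\xi^{\ast}}$ away from the edges of the box — that is, justifying both the reduction of the maximum of $\sum_{i}t_i^{2}$ to the points $u\boldsymbol e_i$ and the final one-dimensional estimate — for which the hypothesis $z^{\ast}\le\min_i(|\beta_i|(v_i-u_i))$ is essential.
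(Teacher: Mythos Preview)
Your overall strategy matches the paper's: verify the design with the Equivalence Theorem~\ref{Satz �quivalenzsatz}, reduce the global check to a one-variable check along the edges of the design region through $\boldsymbol d$, and identify equation~\eqref{Gleichung Poisson-Gamma-Modell} with the stationarity of the sensitivity function at the non-vertex support points. Your derivation of~\eqref{Gleichung Poisson-Gamma-Modell} via the envelope theorem is equivalent to the paper's: the paper computes $h_i'(d_i-z^{\ast}/\beta_i)=0$ for $h_i(x)=g_i(x)-e^{-\boldsymbol f(\cdot)^T\boldsymbol\beta}\tr(\boldsymbol M\boldsymbol M_{\Po}^{-1})$, which at a support point (where $h_i=0$) is exactly $\psi_{\xi^{\ast}}'=0$, and this in turn coincides with $\partial_z\log\det\boldsymbol M(\xi_z)|_{\text{weights fixed}}=0$. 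The two routes differ only in packaging. For uniqueness of $z^{\ast}$ the paper takes a different, indirect path you may prefer: once $\xi_{z^{\ast}}$ is shown $D$-optimal for \emph{any} solution $z^{\ast}$, two solutions would give two $D$-optimal designs, their convex combination would (by concavity of the criterion) again be $D$-optimal but would have more than two support points on an edge, contradicting the shape analysis of $h_i$ below. This avoids the monotonicity computation you propose.

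There is one genuine gap in your reduction to the edges. On the bounded box $\prod_i[-\delta_i,0]$, the claim that for fixed $u=\sum_i t_i$ the maximum of $\sum_i t_i^2$ is attained at $u\boldsymbol e_i$ is only valid when $u\ge-\min_i\delta_i$; for smaller $u$ (which occurs whenever $p-1\ge2$) the maximizer is an extreme point of the slice that is \emph{not} on an edge through $\boldsymbol d$, and your argument does not cover it. The paper handles this by first passing to the \emph{extended} region $\mathscr X_{\text{ext}}=\prod_i(-\infty,0]$ (in your coordinates): there, for every $u\le0$, the maximum of $\sum_i t_i^2$ over $\{t_i\le0,\ \sum_i t_i=u\}$ is indeed $u^2$, attained at $u\boldsymbol e_i$, so the reduction to the half-line edges through $\boldsymbol d$ is clean. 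One then checks the one-variable inequality on the whole half-line $u\le0$, proving $D$-optimality on $\mathscr X_{\text{ext}}$; the hypothesis $z^{\ast}\le\min_i\delta_i$ is then used only to guarantee that the support of $\xi^{\ast}$ lies in $\mathscr X\subset\mathscr X_{\text{ext}}$, which transfers optimality to $\mathscr X$. For the one-variable check itself, the paper's argument is worth knowing: writing the edge condition as $h_i(x)\le0$, one has $h_i''(x)=g_i''(x)-\beta_i^2e^{-\boldsymbol f(\cdot)^T\boldsymbol\beta}\tr(\boldsymbol M\boldsymbol M_{\Po}^{-1})$ with $g_i''$ constant; hence $h_i''$ is injective, so by Rolle's theorem $h_i$ has at most two local extrema, and since $h_i(d_i)=h_i(d_i-z^{\ast}/\beta_i)=0$, $h_i'(d_i-z^{\ast}/\beta_i)=0$, and $h_i\to\mp\infty$ at the ends of the half-line, the sign of $h_i$ is forced and the inequality follows without further estimates.
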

The structure of the $D$-optimal design for the Poisson-Gamma model, which is illustrated for the case of three covariates in Figure \ref{Abbildung D-optimales Design}, is similar to that for the Poisson model. One support point is located at the vertex $\boldsymbol{d}$ of the design region, where $\exp\bigl(\boldsymbol{f}(\boldsymbol{x})^T\boldsymbol{\beta}\bigr)$ is maximal. Since $z^{\ast}\leq\min_{i=1,\ldots,p-1}\bigl(\left|\beta_i\right|\cdot(v_i-u_i)\bigr)$ holds, the other support points are located within the design region. They lie on the edges, which are adjacent to $\boldsymbol{d}$. For the Poisson-Gamma model the distance from $\boldsymbol{d}$ to the other support points is given by $z^{\ast}/\beta_i$. Equation \eqref{Gleichung Poisson-Gamma-Modell} can only be satisfied if $z\cdot(p-1)\cdot w_1^{\ast}(z)-2<0$ and $z\cdot p\cdot w_1^{\ast}(z)-2>0$ hold. Since $1/p<w_1^{\ast}(z)<1/(p-1)$ by Theorem \ref{Satz D-optimale Gewichte Poisson-Gamma-Modell}, it follows that $2\cdot(p-1)/p<z^{\ast}<2\cdot p/(p-1)$. The optimal weights for the first $p-1$ support points are equal, but differ from that for the vertex $\boldsymbol{d}$. This is a difference to the $D$-optimal design for the Poisson model, where all weights are equal. Furthermore, for the Poisson model the distance from the vertex $\boldsymbol{d}$ to the other support points is given by $2/\beta_i$ (cf.\ Russell et al., 2009), i.e.\ $z^{\ast}=2$.
\begin{figure}[!htb]
\centering
\includegraphics[width=8.4cm, trim= 46 32 38 22, clip=true]{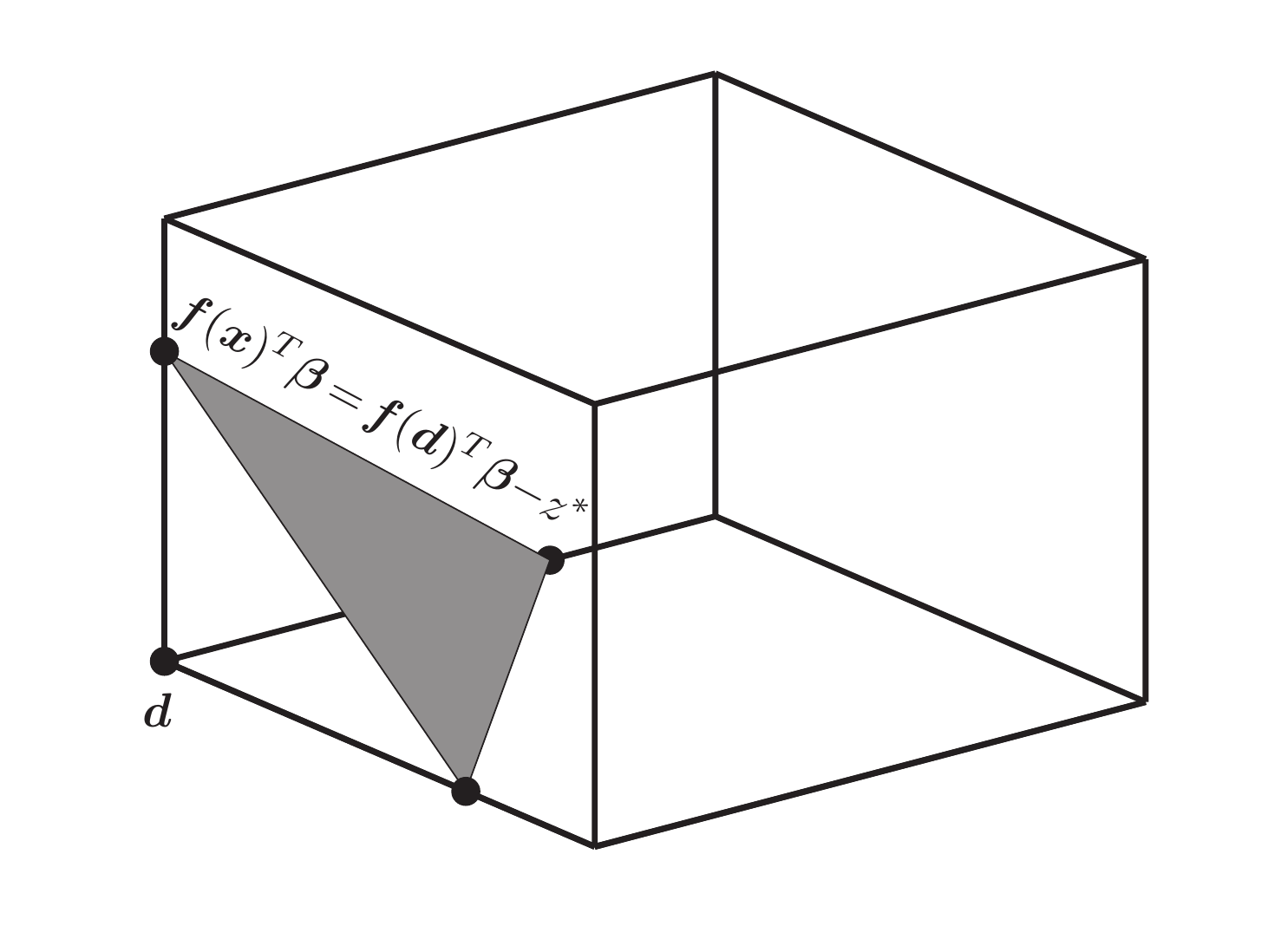}
\caption{Graphical representation of the $D$-optimal design for the Poisson-Gamma model for the case of three covariates}
\label{Abbildung D-optimales Design}
\end{figure}\\
The $D$-optimal design does not depend on $a$. By letting $a=b^{(p-1)/p}$ the $D$-optimality criterion function for the Poisson-Gamma model in \eqref{Determinante Informationsmatrix Poisson-Gamma-Modell} simplifies to $\det\bigl(\boldsymbol{M}(\xi;\boldsymbol{\beta})\bigr)=\det\bigl(\boldsymbol{M}_{\Po}(\xi;\boldsymbol{\beta})\bigr)/\big(b+m\cdot\boldsymbol{e}_1^T\boldsymbol{M}_{\Po}(\xi;\boldsymbol{\beta})\boldsymbol{e}_1\big)$. Hence, for $b\rightarrow0$ we obtain the $D_s$-optimal design for the parameters $\beta_1,\ldots,\beta_{p-1}$ for the Poisson model from Theorem \ref{Satz D-optimale Design Poisson-Gamma-Modell}.
\begin{Theorem}\label{Satz Ds-optimale Design Poisson-Modell}
Let $\mathscr{X}=\left[u_1,v_1\right]\times\ldots\times\left[u_{p-1},v_{p-1}\right]$ and $\beta_i\neq0$ for $i=1,\ldots,p-1$. Let $\boldsymbol{d}=(d_1,\ldots,d_{p-1})$ with $d_i=v_i$ for $\beta_i>0$ and $d_i=u_i$ for $\beta_i<0$. For any $z>0$ let
\begin{align}
w_p^{\ast}(z)=\frac{2}{p+\sqrt{(p-2)^2+4\cdot(p-1)\cdot e^{z}}}
\end{align}
and $w_1^{\ast}(z)=\ldots=w_{p-1}^{\ast}(z)=(1-w_p^{\ast}(z))/(p-1)$. The equation $z\cdot(1-w_p^{\ast}(z))=2$ has a unique solution $z^{\ast}$ in the interval $(0,\infty)$. If $z^{\ast}\leq\min_{i=1,\ldots,p-1}\bigl(\left|\beta_i\right|\cdot(v_i-u_i)\bigr)$, then the $D_s$-optimal design $\xi^{\ast}$ for $\beta_1,\ldots,\beta_{p-1}$ in the Poisson model is given by
\begin{align}
\xi^{\ast}=\begin{Bmatrix}\boldsymbol{d}-(z^{\ast}/\beta_1)\cdot \boldsymbol{e}_1 & \ldots & \boldsymbol{d}-(z^{\ast}/\beta_{p-1})\cdot \boldsymbol{e}_{p-1} & \boldsymbol{d}\\[2pt]w_1^{\ast}(z^{\ast}) & \ldots & w_{p-1}^{\ast}(z^{\ast}) & w_p^{\ast}(z^{\ast})\end{Bmatrix}.\label{Ds-optimale Design}
\end{align}
\end{Theorem}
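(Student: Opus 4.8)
The plan is to obtain Theorem~\ref{Satz Ds-optimale Design Poisson-Modell} as the limiting case $b\to 0$ of Theorem~\ref{Satz D-optimale Design Poisson-Gamma-Modell}, as indicated just before the statement. Since the $D$-optimal design for the Poisson-Gamma model does not depend on $a$, I would put $a=b^{(p-1)/p}$, so that by \eqref{Determinante Informationsmatrix Poisson-Gamma-Modell} the $D$-criterion becomes $\det(\boldsymbol{M}(\xi;\boldsymbol{\beta}))=\det(\boldsymbol{M}_{\Po}(\xi;\boldsymbol{\beta}))/(b+m\cdot\boldsymbol{e}_1^T\boldsymbol{M}_{\Po}(\xi;\boldsymbol{\beta})\boldsymbol{e}_1)$. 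For every design $\xi$ with regular $\boldsymbol{M}_{\Po}(\xi;\boldsymbol{\beta})$ this tends, as $b\to0$, to $\tfrac{1}{m}\det(\boldsymbol{M}_{\Po}(\xi;\boldsymbol{\beta}))/(\boldsymbol{e}_1^T\boldsymbol{M}_{\Po}(\xi;\boldsymbol{\beta})\boldsymbol{e}_1)$, which by the displayed formula for $\det(\boldsymbol{A}^T\boldsymbol{M}_{\Po}(\xi;\boldsymbol{\beta})^{-1}\boldsymbol{A})$ is $\tfrac{1}{m}$ times the reciprocal of the $D_s$-criterion. Hence maximizing the Poisson-Gamma $D$-criterion in the limit $b\to0$ is precisely the Poisson $D_s$-problem.

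Next I would check that the ingredients of Theorem~\ref{Satz D-optimale Design Poisson-Gamma-Modell} converge to those asserted here. For the configuration of Remark~\ref{Bemerkung Design}, i.e.\ $\boldsymbol{x}_p=\boldsymbol{d}$ and $c=\boldsymbol{f}(\boldsymbol{d})^T\boldsymbol{\beta}-z$, the ratio inside the weight of Theorem~\ref{Satz D-optimale Gewichte Poisson-Gamma-Modell} is $(1+\tfrac{m}{b} e^{\boldsymbol{f}(\boldsymbol{d})^T\boldsymbol{\beta}})/(1+\tfrac{m}{b} e^{\boldsymbol{f}(\boldsymbol{d})^T\boldsymbol{\beta}-z})$, which tends to $e^{z}$; thus $w_p^{\ast}(z)$ converges to $2/(p+\sqrt{(p-2)^2+4(p-1)e^{z}})$ and the other weights to $(1-w_p^{\ast}(z))/(p-1)$, the weights in the theorem. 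Dividing \eqref{Gleichung Poisson-Gamma-Modell} by $m$ and letting $b\to0$ (the $b$-term vanishes and the weights converge), the root converges to the root of $\bigl((p-1)w_1^{\ast}(z)e^{\boldsymbol{f}(\boldsymbol{d})^T\boldsymbol{\beta}-z}+w_p^{\ast}(z)e^{\boldsymbol{f}(\boldsymbol{d})^T\boldsymbol{\beta}}\bigr)\bigl(z(p-1)w_1^{\ast}(z)-2\bigr)=0$; since the first factor is strictly positive this reduces to $z(1-w_p^{\ast}(z))=2$, which has a unique root $z^{\ast}\in(0,\infty)$ because $z\mapsto z(1-w_p^{\ast}(z))$ is strictly increasing from $0$ (note $w_p^{\ast}(0^{+})=1/p$) to $\infty$.

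Then I would transfer optimality. For small $b$ one has $z^{\ast}(b)\to z^{\ast}$, so the hypothesis $z^{\ast}(b)\le\min_i(|\beta_i|(v_i-u_i))$ of Theorem~\ref{Satz D-optimale Design Poisson-Gamma-Modell} is met, and the design $\xi_b^{\ast}$ with support $\boldsymbol{d}-(z^{\ast}(b)/\beta_i)\boldsymbol{e}_i,\boldsymbol{d}$ and weights $w_j^{\ast}(z^{\ast}(b))$ is $D$-optimal in the Poisson-Gamma model, so $\det(\boldsymbol{M}(\xi_b^{\ast};\boldsymbol{\beta}))\ge\det(\boldsymbol{M}(\xi;\boldsymbol{\beta}))$ for all $\xi\in\Xi$. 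Since $\xi_b^{\ast}$ converges to the design $\xi^{\ast}$ of \eqref{Ds-optimale Design}, whose $\boldsymbol{M}_{\Po}(\xi^{\ast};\boldsymbol{\beta})$ is regular ($p$ linearly independent support points, all weights positive by Theorem~\ref{Satz D-optimale Gewichte Poisson-Gamma-Modell}), passing to $b\to0$ for $\xi$ with regular $\boldsymbol{M}_{\Po}(\xi;\boldsymbol{\beta})$ gives $\det(\boldsymbol{M}_{\Po}(\xi^{\ast};\boldsymbol{\beta}))/(\boldsymbol{e}_1^T\boldsymbol{M}_{\Po}(\xi^{\ast};\boldsymbol{\beta})\boldsymbol{e}_1)\ge\det(\boldsymbol{M}_{\Po}(\xi;\boldsymbol{\beta}))/(\boldsymbol{e}_1^T\boldsymbol{M}_{\Po}(\xi;\boldsymbol{\beta})\boldsymbol{e}_1)$, i.e.\ $\xi^{\ast}$ minimizes the $D_s$-criterion among regular designs; the extension to all designs for which $\boldsymbol{A}^T\boldsymbol{\beta}$ is identifiable follows from continuity of the $D_s$-criterion, since a rank-$(p-1)$ competitor is approximated by regular designs.

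The hard part will be making the limit interchange rigorous: proving $z^{\ast}(b)\to z^{\ast}$ (uniform convergence of the left-hand side of \eqref{Gleichung Poisson-Gamma-Modell} on compact $z$-intervals, with a simple sign change of the limit at $z^{\ast}$), treating the boundary case $z^{\ast}=\min_i(|\beta_i|(v_i-u_i))$ by an additional continuity argument, and the passage from regular to identifiable competitors. A self-contained alternative avoiding the limit is to imitate the appendix proof of Theorem~\ref{Satz D-optimale Design Poisson-Gamma-Modell} with the simpler criterion: on a minimal-support configuration with $p-1$ points on $\{\boldsymbol{f}(\boldsymbol{x})^T\boldsymbol{\beta}=c\}$ and one point above it, the $D_s$-criterion reduces to $\min_{w_p}\bigl(1+w_p(e^{z}-1)\bigr)/\bigl(w_p(1-w_p)^{p-1}\bigr)$, whose stationarity equation $(p-1)(e^{z}-1)w_p^{2}+p\,w_p-1=0$ yields $w_p^{\ast}(z)$; one then optimizes over $z$ to get $z(1-w_p^{\ast}(z))=2$, uses $z^{\ast}\le\min_i(|\beta_i|(v_i-u_i))$ to keep all support points in $\mathscr{X}$, and verifies the general equivalence theorem inequality for $D_A$-optimality over $\mathscr{X}$.
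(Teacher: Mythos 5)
Your proposal follows essentially the same route as the paper: the paper gives no separate appendix proof of this theorem, but obtains it exactly as you do, by setting $a=b^{(p-1)/p}$ and letting $b\to0$ in the Poisson-Gamma $D$-criterion and in Theorem~\ref{Satz D-optimale Design Poisson-Gamma-Modell}, so that equation \eqref{Gleichung Poisson-Gamma-Modell} degenerates to $z\cdot(1-w_p^{\ast}(z))=2$ and the weights converge to the stated ones. Your verification of the limits (ratio $\to e^{z}$, monotonicity of $z\mapsto z(1-w_p^{\ast}(z))$, transfer of the optimality inequality) is correct and in fact supplies details the paper leaves implicit.
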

The equation for $z^{\ast}$ can be written as $z=2/(1-w_p^{\ast}(z))$. Since $0<w_p^{\ast}(z)<1/p$, it follows that $2<z^{\ast}<2\cdot p/(p-1)$. In particular, in the Poisson model the distance of the support points on the edges to the support point at the vertex is larger for the $D_s$-optimal design than for the $D$-optimal design. Furthermore, in contrast to $D$-optimality, the $D_s$-optimal weights are not all equal.
\begin{Theorem}\label{Satz DA-optimale Design}
Let $\boldsymbol{f}=(1,f_1,\ldots,f_{p-1})^T$ and let $\boldsymbol{A}^T=\big(\boldsymbol{0}_s,\boldsymbol{\tilde{A}}\big)$, where $\boldsymbol{\tilde{A}}$ is a $s\times(p-1)$-matrix with $\rank(\boldsymbol{\tilde{A}})=s$. The $D_A$-optimality criterion functions for the Poisson and Poisson-Gamma model are identical. Hence, a design $\xi^{\ast}$ is $D_A$-optimal in the Poisson-Gamma model if and only if the design $\xi^{\ast}$ is $D_A$-optimal in the Poisson model.
\end{Theorem}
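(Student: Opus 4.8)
The plan is to reduce the claim to the description of a generalized inverse of $\boldsymbol{M}(\xi;\boldsymbol{\beta})$ in Lemma~\ref{Lemma Verallgemeinerte Inverse Informationsmatrix Poisson-Gamma-Modell} together with the block structure $\boldsymbol{A}^T=(\boldsymbol{0}_s,\tilde{\boldsymbol{A}})$. First I would record that appending a zero column does not change the rank, so $\rank(\boldsymbol{A})=\rank(\tilde{\boldsymbol{A}})=s$ with $s\le p-1<p$, and Theorem~\ref{Satz Identifizierbarkeit} is applicable: the set of designs for which $\boldsymbol{A}^T\boldsymbol{\beta}$ is identifiable is the same in the Poisson and in the Poisson-Gamma model. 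Hence in both models the $D_A$-criterion is optimised over the identical feasible set $\{\xi\in\Xi:\boldsymbol{A}^T\boldsymbol{\beta}\text{ is identifiable for }\xi\}$, and it only remains to compare the two objective functions on this set.

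Fix such a design $\xi$ and an arbitrary generalized inverse $\boldsymbol{M}_{\Po}(\xi;\boldsymbol{\beta})^-$. By Lemma~\ref{Lemma Verallgemeinerte Inverse Informationsmatrix Poisson-Gamma-Modell} the matrix $\boldsymbol{M}(\xi;\boldsymbol{\beta})^-=\frac{b}{a}\boldsymbol{M}_{\Po}(\xi;\boldsymbol{\beta})^-+\frac{m}{a}\boldsymbol{e}_1\boldsymbol{e}_1^T$ is a generalized inverse of $\boldsymbol{M}(\xi;\boldsymbol{\beta})$. The key point is that the first column of $\boldsymbol{A}^T$ vanishes, so $\boldsymbol{A}^T\boldsymbol{e}_1=\boldsymbol{0}_s$, and multiplying the displayed generalized inverse by $\boldsymbol{A}^T$ on the left and $\boldsymbol{A}$ on the right annihilates the rank-one correction term:
\begin{align}
\boldsymbol{A}^T\boldsymbol{M}(\xi;\boldsymbol{\beta})^-\boldsymbol{A}=\frac{b}{a}\,\boldsymbol{A}^T\boldsymbol{M}_{\Po}(\xi;\boldsymbol{\beta})^-\boldsymbol{A}.\notag
\end{align}
Taking determinants gives $\det\bigl(\boldsymbol{A}^T\boldsymbol{M}(\xi;\boldsymbol{\beta})^-\boldsymbol{A}\bigr)=(b/a)^s\det\bigl(\boldsymbol{A}^T\boldsymbol{M}_{\Po}(\xi;\boldsymbol{\beta})^-\boldsymbol{A}\bigr)$ for every feasible $\xi$; the two $D_A$-criterion functions thus coincide up to the positive factor $(b/a)^s$, which does not depend on $\xi$ (just as the constant $(a/b)^p$ occurring in the $D$-optimality criterion for the Poisson-Gamma model, this factor is immaterial for the optimisation and cancels in the $D_A$-efficiency). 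Minimising one objective over the common feasible set is therefore the same as minimising the other, which yields the asserted equivalence of $D_A$-optimality in the two models.

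The only point that requires a little care is that $\det(\boldsymbol{A}^T\boldsymbol{M}^-\boldsymbol{A})$ has to be well defined for a possibly singular $\boldsymbol{M}(\xi;\boldsymbol{\beta})$, i.e.\ independent of the chosen generalized inverse. This is the standard invariance property that holds precisely when $\boldsymbol{A}^T\boldsymbol{\beta}$ is identifiable (the range condition $\boldsymbol{A}=\boldsymbol{M}\boldsymbol{H}$), and it also guarantees that the value obtained from the particular generalized inverse used above equals the Poisson $D_A$-criterion value. Beyond invoking this fact and Theorem~\ref{Satz Identifizierbarkeit}, the proof is essentially the one-line matrix computation displayed above, so no substantial obstacle is expected.
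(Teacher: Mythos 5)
Your argument is exactly the paper's proof: apply Lemma \ref{Lemma Verallgemeinerte Inverse Informationsmatrix Poisson-Gamma-Modell} to the generalized inverse, use $\boldsymbol{A}^T\boldsymbol{e}_1=\boldsymbol{0}_s$ to drop the rank-one term so that $\det\bigl(\boldsymbol{A}^T\boldsymbol{M}(\xi;\boldsymbol{\beta})^-\boldsymbol{A}\bigr)=(b/a)^s\det\bigl(\boldsymbol{A}^T\boldsymbol{M}_{\Po}(\xi;\boldsymbol{\beta})^-\boldsymbol{A}\bigr)$, and invoke Theorem \ref{Satz Identifizierbarkeit} for the common feasible set. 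The additional remark on the invariance of the criterion value under the choice of generalized inverse is a correct (and slightly more careful) observation that the paper leaves implicit.
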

\begin{Remark}
Theorem \ref{Satz DA-optimale Design} holds for arbitrary regression functions $f_i$, $i=1,\ldots,p-1$. For the multiple regression model with $f_i(\boldsymbol{x})=x_i$ the $D_s$-optimal design for $\beta_1,\ldots,\beta_{p-1}$ in the Poisson-Gamma model is given by design \eqref{Ds-optimale Design} of Theorem \ref{Satz Ds-optimale Design Poisson-Modell}.
\end{Remark}
\begin{Example}
We consider the Poisson-Gamma and the Poisson model with one and two covariates. For the one covariate case, let $\mathscr{X}=\left[0,10\right]$, $\boldsymbol{f}(x)=(1,x)^T$ and $\boldsymbol{\beta}=(0,-1)^T$. For the two covariate case, let $\mathscr{X}=\left[0,10\right]\times\left[0,10\right]$, $\boldsymbol{f}(\boldsymbol{x})=(1,x_1,x_2)^T$ and $\boldsymbol{\beta}=(0,-1,-1)^T$. Furthermore, let $m=10$ and $b=1$.\\
The $D$-optimal designs for the Poisson-Gamma model follow from Theorem \ref{Satz D-optimale Design Poisson-Gamma-Modell}. The solution of equation \eqref{Gleichung Poisson-Gamma-Modell} is given by $z^{\ast}=2.341$ for one covariate and by $z^{\ast}=2.240$ for two covariates. For one covariate $D_s$-optimality for $\beta_1$ coincides with $c$-optimality with $\boldsymbol{c}=(0,1)^T$. Theorems \ref{Satz Ds-optimale Design Poisson-Modell} and \ref{Satz DA-optimale Design} yield the $D_s$-\,/\,$c$-optimal design for $\beta_1$ and the $D_s$-optimal design for $\beta_1$ and $\beta_2$ for the Poisson and Poisson-Gamma model. These optimal designs and the $D$-optimal design for the Poisson model are given in Tables \ref{Tabelle D-optimale Designs eine Kovariable Poisson-Gamma-Modell} and \ref{Tabelle D-optimale Designs Poisson-Gamma-Modell} and are additionally compared in terms of their efficiencies.
\begin{table}
\centering
\begin{tabular}{lccccc} 
\toprule
& & & \multicolumn{3}{c}{Efficiencies} \\ \cmidrule(r){4-6}
Model & Criterion & \hspace{26pt} Optimal design \hspace{26pt} & Po $D$ & P-G $D$ & Po $D_s$\\
\midrule
Poisson & $D$ & {\footnotesize $\begin{aligned}
\begin{Bmatrix}
0 & 2\\
1/2 & 1/2
\end{Bmatrix}
\end{aligned}$} & 1 & 0.925 & 0.769\\[8pt]
\addlinespace
Poisson-Gamma & $D$ & {\footnotesize $\begin{aligned}
\begin{Bmatrix}
0 & 2.341\\
0.297 & 0.703
\end{Bmatrix}
\end{aligned}$} & 0.902 & 1 & 0.974\\
\addlinespace
Poisson & \multirow{2}{*}{$D_s$\,/\,$c$} & \multirow{2}{*}{\footnotesize $\begin{aligned}
\begin{Bmatrix}
0 & 2.557\\
0.218 & 0.782
\end{Bmatrix}
\end{aligned}$} & \multirow{2}{*}{0.799} & \multirow{2}{*}{0.981} & \multirow{2}{*}{1}\\
Poisson-Gamma & & & & &\\
\addlinespace
\bottomrule
\end{tabular}
\caption{Optimal designs and comparison of efficiencies for the Poisson-Gamma model (P-G) and the Poisson model (Po) with one covariate and $\boldsymbol{\beta}=(0,-1)^T$}
\label{Tabelle D-optimale Designs eine Kovariable Poisson-Gamma-Modell}
\end{table}
\begin{table}
\centering
\begin{tabular}{lccccc} 
\toprule
& & & \multicolumn{3}{c}{Efficiencies} \\ \cmidrule(r){4-6}
Model & Criterion & Optimal design & Po $D$ & P-G $D$ & Po $D_s$\\ 
\midrule
Poisson & $D$ & {\footnotesize $\begin{aligned}
\begin{Bmatrix}
(2,0) & (0,2) & (0,0) \\
1/3 & 1/3 & 1/3
\end{Bmatrix}
\end{aligned}$} & 1 & 0.956 & 0.886\\[8pt]
\addlinespace
Poisson-Gamma & $D$ & {\footnotesize $\begin{aligned}
\begin{Bmatrix}
(2.240,0) & (0,2.240) & (0,0) \\
0.396 & 0.396 & 0.208
\end{Bmatrix}
\end{aligned}$} & 0.950 & 1 & 0.988\\
\addlinespace
Poisson & \multirow{2}{*}{$D_s$} & \multirow{2}{*}{\footnotesize $\begin{aligned}
\begin{Bmatrix}
(2.385,0) & (0,2.385) & (0,0) \\
0.419 & 0.419 & 0.162
\end{Bmatrix}
\end{aligned}$} & \multirow{2}{*}{0.895} & \multirow{2}{*}{0.990} & \multirow{2}{*}{1}\\
Poisson-Gamma & & & & &\\
\addlinespace
\bottomrule
\end{tabular}
\caption{Optimal designs and comparison of efficiencies for the Poisson-Gamma model (P-G) and the Poisson model (Po) with two covariates and $\boldsymbol{\beta}=(0,-1,-1)^T$}
\label{Tabelle D-optimale Designs Poisson-Gamma-Modell}
\end{table}\\
Both in the case of one covariate and in the case of two covariates all three optimal designs have a similar structure. One support point is located at the origin. The other support point(s) of the $D$-optimal design for the Poisson-Gamma model are located between the corresponding support point(s) of the $D$- and $D_s$-optimal design for the Poisson model.\\
In this example, compared to the $D$-optimal design in the Poisson model, the $D_s$-optimal design has a higher efficiency with respect to the $D$-optimal design in the Poisson-Gamma model. Furthermore, we observe that the efficiencies increase for all optimality criteria with the number of covariates, which can be explained by the designs getting more equal to each other. 
\end{Example}

\section{Optimal designs for linear optimality criteria} \label{Abschnitt Optimale Designs für lineare Optimalitätskriterien}

In this section let $\boldsymbol{f}=(1,f_1,\ldots,f_{p-1})^T$ with arbitrary regression functions $f_i$, for example $f_i(x)=x^i$ for polynomial regression with one-dimensional covariate $x\in\mathbb{R}$ or $f_i(\boldsymbol{x})=x_i$ for multiple regression with $\boldsymbol{x}\in\mathbb{R}^{p-1}$. Using Lemma \ref{Lemma Verallgemeinerte Inverse Informationsmatrix Poisson-Gamma-Modell} we show in the next theorem that the $c$- and $L$-optimal designs in the Poisson-Gamma model coincide with those in the Poisson model.
\begin{Theorem}\label{Satz L- und c-optimale Designs}
A design $\xi^{\ast}$ is $L$-optimal ($c$-optimal) in the Poisson-Gamma model if and only if the design $\xi^{\ast}$ is $L$-optimal ($c$-optimal) in the Poisson model.
\end{Theorem}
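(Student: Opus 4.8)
The plan is to reduce the $L$-optimality (and hence $c$-optimality) problem in the Poisson-Gamma model to the corresponding problem in the Poisson model by means of the explicit generalized-inverse formula in Lemma \ref{Lemma Verallgemeinerte Inverse Informationsmatrix Poisson-Gamma-Modell}, which differs from a generalized inverse in the Poisson model only by a rank-one term that does not depend on the design. First I would fix $\boldsymbol{B}=\boldsymbol{A}\boldsymbol{A}^T$ (respectively $\boldsymbol{B}=\boldsymbol{c}\boldsymbol{c}^T$ in the $c$-optimality case) and invoke Theorem \ref{Satz Identifizierbarkeit} to note that the set of designs $\xi\in\Xi$ for which $\boldsymbol{A}^T\boldsymbol{\beta}$ is identifiable coincides in the two models; thus the $L$-optimization is carried out over the same admissible set in both cases, and only the criterion values on this common set need to be compared.

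Next, for an admissible design $\xi$ I would pick an arbitrary generalized inverse $\boldsymbol{M}_{\Po}(\xi;\boldsymbol{\beta})^-$ and use Lemma \ref{Lemma Verallgemeinerte Inverse Informationsmatrix Poisson-Gamma-Modell} to obtain the generalized inverse $\boldsymbol{M}(\xi;\boldsymbol{\beta})^-=\frac{b}{a}\cdot\boldsymbol{M}_{\Po}(\xi;\boldsymbol{\beta})^-+\frac{m}{a}\cdot\boldsymbol{e}_1\boldsymbol{e}_1^T$ of $\boldsymbol{M}(\xi;\boldsymbol{\beta})$. Since $\boldsymbol{A}^T\boldsymbol{\beta}$ is identifiable, $\tr\bigl(\boldsymbol{M}(\xi;\boldsymbol{\beta})^-\boldsymbol{B}\bigr)$ does not depend on the choice of generalized inverse, so taking the trace and using linearity yields
\[
\tr\bigl(\boldsymbol{M}(\xi;\boldsymbol{\beta})^-\boldsymbol{B}\bigr)=\frac{b}{a}\cdot\tr\bigl(\boldsymbol{M}_{\Po}(\xi;\boldsymbol{\beta})^-\boldsymbol{B}\bigr)+\frac{m}{a}\cdot\boldsymbol{e}_1^T\boldsymbol{B}\boldsymbol{e}_1.
\]
The last summand is a constant independent of $\xi$ and $\frac{b}{a}>0$, so for any two admissible designs $\xi_1,\xi_2$ one has $\tr\bigl(\boldsymbol{M}(\xi_1;\boldsymbol{\beta})^-\boldsymbol{B}\bigr)\leq\tr\bigl(\boldsymbol{M}(\xi_2;\boldsymbol{\beta})^-\boldsymbol{B}\bigr)$ if and only if $\tr\bigl(\boldsymbol{M}_{\Po}(\xi_1;\boldsymbol{\beta})^-\boldsymbol{B}\bigr)\leq\tr\bigl(\boldsymbol{M}_{\Po}(\xi_2;\boldsymbol{\beta})^-\boldsymbol{B}\bigr)$.

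From this I would conclude that a design $\xi^{\ast}$ minimizes $\tr\bigl(\boldsymbol{M}(\xi;\boldsymbol{\beta})^-\boldsymbol{B}\bigr)$ over all $\xi\in\Xi$ for which $\boldsymbol{A}^T\boldsymbol{\beta}$ is identifiable precisely when it minimizes $\tr\bigl(\boldsymbol{M}_{\Po}(\xi;\boldsymbol{\beta})^-\boldsymbol{B}\bigr)$ over the same set, which is exactly the equivalence of $L$-optimality in the two models; the $c$-optimality statement is the special case $\boldsymbol{B}=\boldsymbol{c}\boldsymbol{c}^T$, where the design-independent constant equals $\frac{m}{a}\cdot(\boldsymbol{e}_1^T\boldsymbol{c})^2$. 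The one point requiring care is the invariance of the criterion value under the choice of generalized inverse: this is where Theorem \ref{Satz Identifizierbarkeit} is essential, as it guarantees both that identifiability — and hence this invariance — transfers between the two models and that the two optimization problems range over the same feasible set of designs.
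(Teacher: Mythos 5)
Your proposal is correct and follows essentially the same route as the paper's proof: identifiability is transferred via Theorem \ref{Satz Identifizierbarkeit}, the generalized inverse from Lemma \ref{Lemma Verallgemeinerte Inverse Informationsmatrix Poisson-Gamma-Modell} is inserted into the trace, and the design-independent summand $\frac{m}{a}\cdot\boldsymbol{e}_1^T\boldsymbol{B}\boldsymbol{e}_1$ together with the positive factor $\frac{b}{a}$ yields the equivalence, with $c$-optimality as the special case $\boldsymbol{B}=\boldsymbol{c}\boldsymbol{c}^T$. Your additional remark on the invariance of the criterion value under the choice of generalized inverse is a sensible point of care that the paper leaves implicit.
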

\begin{proof}
Let $\boldsymbol{B}=\boldsymbol{A}\boldsymbol{A}^T$. By Theorem \ref{Satz Identifizierbarkeit}, for a design $\xi$ the identifiability of $\boldsymbol{A}^T \boldsymbol{\beta}$ is equivalent in both models. With Lemma \ref{Lemma Verallgemeinerte Inverse Informationsmatrix Poisson-Gamma-Modell} for the relation between the generalized inverses of $\boldsymbol{M}(\xi;\boldsymbol{\beta})$ and $\boldsymbol{M}_{\Po}(\xi;\boldsymbol{\beta})$ we have for a design $\xi$:
\begin{align}
\tr\bigl(\boldsymbol{M}(\xi;\boldsymbol{\beta})^-\boldsymbol{B}\bigr)&=\tr\left(\left(\frac{b}{a}\cdot \boldsymbol{M}_{\Po}(\xi;\boldsymbol{\beta})^-+\frac{m}{a}\cdot \boldsymbol{e}_1\boldsymbol{e}_1^T\right)\cdot \boldsymbol{B}\right)\notag\\
&=\frac{b}{a}\cdot\tr\bigl(\boldsymbol{M}_{\Po}(\xi;\beta)^-\boldsymbol{B}\bigr)+\frac{m}{a}\cdot\boldsymbol{e}_1^T\boldsymbol{B}\boldsymbol{e}_1.\notag
\end{align}
Since the second summand does not depend on the design, the equivalence of the optimality of a design in both models follows.\\
With $\boldsymbol{B}=\boldsymbol{c}\boldsymbol{c}^T$ we obtain $c$-optimality as a special case of $L$-optimality.
\end{proof}
Theorem \ref{Satz L- und c-optimale Designs} shows that for the determination of $c$- and $L$-optimal designs we only have to consider the Poisson model, which facilitates the search for optimal designs. In particular, the results for the Poisson model of Ford et al.\ (1992) and Rodríguez-Torreblanca and Rodríguez-Díaz (2007) for one covariate and of Schmidt (2018) for multiple regression can be applied to the Poisson-Gamma model.

\section{Discussion}

For the Poisson-Gamma model the probability density function can be computed analytically, which allows deriving the information matrix. This is not possible for other distributions for the random effect like the normal distribution.\\
Based on some relations between the information matrices of the Poisson and Poisson-Gamma model we decomposed the $D$-optimality criterion function for the Poisson-Gamma model into a weighted sum of the $D$- and $D_s$-optimality criterion function for the Poisson model. The optimal designs for all these optimality criteria have the same structure, in particular they have a minimal support. Apart from the differences concerning the support points, the $D$-optimal weights for the Poisson-Gamma model differ from the equal allocation rule of the $D$-optimal weights for the Poisson model.\\
Since for $L$- and $c$-optimality the optimal designs are equal for the Poisson and Poisson-Gamma model, known results for the Poisson model can be used.\\
A possible extension of this work is to investigate for other optimality criteria, if there is also some relation between the optimal designs for the Poisson and Poisson-Gamma model. Since both models are nonlinear, the optimal designs depend on the unknown parameters for all optimality criteria under consideration. A way to obtain more robust designs regarding parameter misspecification is to use standardized maximin optimality criteria (cf.\ Müller, 1995), which maximize the worst efficiency with respect to a prespecified parameter set.

\appendix
\section{Appendix}

\begin{proof}[Proof of Theorem \ref{Satz Dichte Poisson-Gamma-Modell}]\ \\
Let $\boldsymbol{y}=(y_1,\ldots,y_m)$. Integration over the random effect yields:
\begin{align}
f_{\boldsymbol{Y}}(\boldsymbol{y})&=\int_0^{\infty} f_{\boldsymbol{Y}|\Theta=\theta}(\boldsymbol{y})\cdot f_{\Theta}(\theta)\thinspace\mathrm{d}\theta\notag\\[-4pt]
&=\int_0^{\infty} \prod_{j=1}^m \left(\theta^{y_j}\cdot\frac{e^{\boldsymbol{f}(\boldsymbol{x}_j)^T\boldsymbol{\beta}\cdot y_j}}{y_j!}\cdot e^{-\theta\cdot e^{\boldsymbol{f}(\boldsymbol{x}_j)^T\boldsymbol{\beta}}}\right)\cdot\frac{b^a}{\Gamma(a)}\cdot\theta^{a-1}\cdot e^{-b\cdot\theta}\thinspace\mathrm{d}\theta\notag\\[-4pt]
&=\int_0^{\infty} \theta^{\sum_{j=1}^m y_j}\cdot\frac{e^{\sum_{j=1}^m \boldsymbol{f}(\boldsymbol{x}_j)^T\beta\cdot y_j}}{\prod_{j=1}^m y_j!}\cdot e^{-\theta\cdot \sum_{j=1}^m e^{\boldsymbol{f}(\boldsymbol{x}_j)^T\boldsymbol{\beta}}}\cdot\frac{b^a}{\Gamma(a)}\cdot\theta^{a-1}\cdot e^{-b\cdot\theta}\thinspace\mathrm{d}\theta.\notag
\end{align}
With $\tilde{a}=a+\sum_{j=1}^m y_j$ and $\tilde{b}=b+\sum_{j=1}^m e^{\boldsymbol{f}(\boldsymbol{x}_j)^T\boldsymbol{\beta}}$ we obtain
\begin{align}
f_{\boldsymbol{Y}}(\boldsymbol{y})&=\frac{b^a\cdot e^{\sum_{j=1}^m \boldsymbol{f}(\boldsymbol{x}_j)^T\boldsymbol{\beta}\cdot y_j}\cdot\Gamma(\tilde{a})}{\Gamma(a)\cdot\prod_{j=1}^m y_j!\cdot\tilde{b}^{\tilde{a}}}\cdot\int_0^{\infty} \frac{\tilde{b}^{\tilde{a}}}{\Gamma(\tilde{a})}\cdot\theta^{\tilde{a}-1}\cdot e^{-\tilde{b}\cdot\theta}\thinspace\mathrm{d}\theta=\frac{b^a\cdot e^{\sum_{j=1}^m \boldsymbol{f}(\boldsymbol{x}_j)^T\boldsymbol{\beta}\cdot y_j}\cdot\Gamma(\tilde{a})}{\Gamma(a)\cdot\prod_{j=1}^m y_j!\cdot\tilde{b}^{\tilde{a}}}\notag
\end{align}
as the joint density of $Y_1,\ldots,Y_m$.
\end{proof}
\begin{proof}[Proof of Theorem \ref{Satz Fisher-Informationsmatrix Poisson-Gamma-Modell}]\ \\
The logarithm of the probability density function of $\boldsymbol{Y}=(Y_1,\ldots,Y_m)$, which was derived in Theorem \ref{Satz Dichte Poisson-Gamma-Modell}, is given by
\begin{align}
\ln\bigl(f_{\boldsymbol{Y}}(\boldsymbol{y})\bigr)&=C(\boldsymbol{y})+\sum_{j=1}^m \boldsymbol{f}(\boldsymbol{x}_j)^T\boldsymbol{\beta}\cdot y_j-\left(a+\sum_{j=1}^m y_j\right)\cdot\ln\left(b+\sum_{j=1}^m e^{\boldsymbol{f}(\boldsymbol{x}_j)^T\boldsymbol{\beta}}\right)\notag
\end{align}
with $C(\boldsymbol{y})$ not depending on $\boldsymbol{\beta}$.
The first and second derivative with respect to $\boldsymbol{\beta}$ are given by:
\begin{align}
\frac{\partial\ln\bigl(f_{\boldsymbol{Y}}(\boldsymbol{y})\bigr)}{\partial\boldsymbol{\beta}}&=\sum_{j=1}^m \boldsymbol{f}(\boldsymbol{x}_j)^T\cdot y_j-\frac{a+\sum_{j=1}^m y_j}{b+\sum_{j=1}^m e^{\boldsymbol{f}(\boldsymbol{x}_j)^T\boldsymbol{\beta}}}\cdot\sum_{j=1}^m e^{\boldsymbol{f}(\boldsymbol{x}_j)^T\boldsymbol{\beta}}\cdot\boldsymbol{f}(\boldsymbol{x}_j)^T,\notag\\
\frac{\partial^2\ln\bigl(f_{\boldsymbol{Y}}(\boldsymbol{y})\bigr)}{\partial\boldsymbol{\beta}\thinspace\partial\boldsymbol{\beta}^T}&=\frac{a+\sum_{j=1}^m y_j}{\big(b+\sum_{j=1}^m e^{\boldsymbol{f}(\boldsymbol{x}_j)^T\boldsymbol{\beta}}\big)^2}\cdot\sum_{j=1}^m e^{\boldsymbol{f}(\boldsymbol{x}_j)^T\boldsymbol{\beta}}\cdot\boldsymbol{f}(\boldsymbol{x}_j)\cdot\sum_{j=1}^m e^{\boldsymbol{f}(\boldsymbol{x}_j)^T\boldsymbol{\beta}}\cdot\boldsymbol{f}(\boldsymbol{x}_j)^T\notag\\&\;\;\;\;-\frac{a+\sum_{j=1}^m y_j}{b+\sum_{j=1}^m e^{\boldsymbol{f}(\boldsymbol{x}_j)^T\boldsymbol{\beta}}}\cdot\sum_{j=1}^m e^{\boldsymbol{f}(\boldsymbol{x}_j)^T\beta}\cdot\boldsymbol{f}(\boldsymbol{x}_j)\boldsymbol{f}(\boldsymbol{x}_j)^T.\notag
\end{align}
Since $\boldsymbol{I}(\boldsymbol{\beta})=-E\left(\frac{\partial^2\ln(f_{\boldsymbol{Y}}(\boldsymbol{Y}))}{\partial\boldsymbol{\beta}\thinspace\partial\boldsymbol{\beta}^T}\right)$ and $E(Y_j)=\frac{a}{b}\cdot\exp(\boldsymbol{f}(\boldsymbol{x}_j)^T\boldsymbol{\beta})$ the Fisher information matrix for the parameter vector $\boldsymbol{\beta}$ is given by:
\begin{align}
\boldsymbol{I}(\boldsymbol{\beta})=\frac{a}{b}\cdot\left(\sum_{j=1}^m e^{\boldsymbol{f}(\boldsymbol{x}_j)^T\boldsymbol{\beta}}\negthinspace\cdot\negthinspace \boldsymbol{f}(x_j)\boldsymbol{f}(x_j)^T-\frac{\sum_{j=1}^m e^{\boldsymbol{f}(\boldsymbol{x}_j)^T\boldsymbol{\beta}}\negthinspace\cdot\negthinspace \boldsymbol{f}(x_j)\cdot\sum_{j=1}^m e^{\boldsymbol{f}(\boldsymbol{x}_j)^T\boldsymbol{\beta}}\negthinspace\cdot\negthinspace \boldsymbol{f}(x_j)^T}{\sum_{j=1}^m e^{\boldsymbol{f}(\boldsymbol{x}_j)^T\boldsymbol{\beta}}+b}\right).\notag
\end{align}
By identifying the components of $\boldsymbol{I}(\boldsymbol{\beta})$ with those of $\boldsymbol{I}_{\Po}(\boldsymbol{\beta})$, the information matrix $\boldsymbol{I}(\boldsymbol{\beta})$ can be represented in terms of $\boldsymbol{I}_{\Po}(\boldsymbol{\beta})$ as in Theorem \ref{Satz Fisher-Informationsmatrix Poisson-Gamma-Modell}.
\end{proof}
\begin{Lemma}\label{Lemma Matrix für Darstellung Informationsmatrix Poisson-Gamma-Modell}
Let the matrix $\boldsymbol{L}(\xi;\boldsymbol{\beta})$ be given by
\begin{align}
\boldsymbol{L}(\xi;\boldsymbol{\beta})=\boldsymbol{I}-\frac{\boldsymbol{M}_{\Po}(\xi;\boldsymbol{\beta})\boldsymbol{e}_1\boldsymbol{e}_1^T}{\boldsymbol{e}_1^T\boldsymbol{M}_{\Po}(\xi;\boldsymbol{\beta})\boldsymbol{e}_1+\frac{b}{m}}.
\end{align}
Then the following statements hold:
\begin{enumerate}
\item[(i)] $\boldsymbol{M}(\xi;\boldsymbol{\beta})=\frac{a}{b}\cdot\boldsymbol{L}(\xi;\boldsymbol{\beta})\boldsymbol{M}_{\Po}(\xi;\boldsymbol{\beta})=\frac{a}{b}\cdot\boldsymbol{M}_{\Po}(\xi;\boldsymbol{\beta})\boldsymbol{L}(\xi;\boldsymbol{\beta})^T$.
\item[(ii)] The matrix $\boldsymbol{L}(\xi;\boldsymbol{\beta})$ is regular.
\item[(iii)] $\boldsymbol{L}(\xi;\boldsymbol{\beta})=\boldsymbol{I}-\frac{m}{a}\cdot\boldsymbol{M}(\xi;\boldsymbol{\beta})\boldsymbol{e}_1\boldsymbol{e}_1^T$.
\end{enumerate}
\end{Lemma}
\begin{proof}\ \\
Let $\boldsymbol{M}=\boldsymbol{M}(\xi;\boldsymbol{\beta})$, $\boldsymbol{M}_{\Po}=\boldsymbol{M}_{\Po}(\xi;\boldsymbol{\beta})$ and $\boldsymbol{L}=\boldsymbol{L}(\xi;\boldsymbol{\beta})$.\\
(i) The statement follows directly from equation \eqref{Darstellung Informationsmatrix Poisson-Gamma-Modell} for the information matrix, where for the second equation the symmetry of $\boldsymbol{M}_{\Po}$ is used.\\
(ii) $\boldsymbol{L}$ is a lower triangular matrix in which all main diagonal entries are equal to one except for the top diagonal entry. Since $\det(\boldsymbol{L})=1-\left(\boldsymbol{e}_1^T\boldsymbol{M}_{\Po}\boldsymbol{e}_1\right)/\left(\boldsymbol{e}_1^T\boldsymbol{M}_{\Po}\boldsymbol{e}_1+\frac{b}{m}\right)\neq0$ the matrix $\boldsymbol{L}$ is regular.\\
(iii) The statement follows with equation \eqref{Darstellung Informationsmatrix Poisson-Gamma-Modell}:
\begin{align}
\boldsymbol{I}-\frac{m}{a}\cdot\boldsymbol{M}\boldsymbol{e}_1\boldsymbol{e}_1^T&=\boldsymbol{I}-\frac{m}{b}\cdot\left(\boldsymbol{M}_{\Po}-\frac{\boldsymbol{M}_{\Po}\boldsymbol{e}_1\boldsymbol{e}_1^T\boldsymbol{M}_{\Po}}{\boldsymbol{e}_1^T\boldsymbol{M}_{\Po}\boldsymbol{e}_1+\frac{b}{m}}\right)\boldsymbol{e}_1\boldsymbol{e}_1^T\notag\\
&=\boldsymbol{I}-\frac{m}{b}\cdot\boldsymbol{M}_{\Po}\boldsymbol{e}_1\boldsymbol{e}_1^T\cdot\left(1-\frac{\boldsymbol{e}_1^T\boldsymbol{M}_{\Po}\boldsymbol{e}_1}{\boldsymbol{e}_1^T\boldsymbol{M}_{\Po}\boldsymbol{e}_1+\frac{b}{m}}\right)=\boldsymbol{I}-\frac{\boldsymbol{M}_{\Po}\boldsymbol{e}_1\boldsymbol{e}_1^T}{\boldsymbol{e}_1^T\boldsymbol{M}_{\Po}\boldsymbol{e}_1+\frac{b}{m}}.\notag
\end{align}
Here it was used that $\boldsymbol{e}_1^T\boldsymbol{M}_{\Po}\boldsymbol{e}_1$ is a real number.
\end{proof}
\begin{proof}[Proof of Lemma \ref{Lemma Rang Zusammenhang Informationsmatrix}]\ \\
By Lemma \ref{Lemma Matrix für Darstellung Informationsmatrix Poisson-Gamma-Modell} the information matrix for the Poisson-Gamma model can be written as $\boldsymbol{M}(\xi;\boldsymbol{\beta})=\frac{a}{b}\cdot\boldsymbol{L}(\xi;\boldsymbol{\beta})\boldsymbol{M}_{\Po}(\xi;\boldsymbol{\beta})$, where $\boldsymbol{L}(\xi;\boldsymbol{\beta})$ is a regular matrix and thus has full rank. It follows that $\rank(\boldsymbol{M}(\xi;\boldsymbol{\beta}))=\rank\bigl(\boldsymbol{L}(\xi;\boldsymbol{\beta})\boldsymbol{M}_{\Po}(\xi;\boldsymbol{\beta})\bigr)=\rank(\boldsymbol{M}_{\Po}(\xi;\boldsymbol{\beta}))$.
\end{proof}
\begin{proof}[Proof of Theorem \ref{Satz Identifizierbarkeit}]\ \\
Let $\boldsymbol{H}$ be a $p\times s$-matrix and let $\tilde{\boldsymbol{H}}=\frac{b}{a}\cdot(\boldsymbol{L}(\xi;\boldsymbol{\beta})^T)^{-1}\boldsymbol{H}$ with the regular matrix $\boldsymbol{L}(\xi;\boldsymbol{\beta})$ from Lemma \ref{Lemma Matrix für Darstellung Informationsmatrix Poisson-Gamma-Modell}. We have with Lemma \ref{Lemma Matrix für Darstellung Informationsmatrix Poisson-Gamma-Modell} (i):
\begin{align}
\boldsymbol{A}-\boldsymbol{M}(\xi;\boldsymbol{\beta})\tilde{\boldsymbol{H}}&=\boldsymbol{A}-\boldsymbol{M}_{\Po}(\xi;\boldsymbol{\beta})\boldsymbol{L}(\xi;\boldsymbol{\beta})^T(\boldsymbol{L}(\xi;\boldsymbol{\beta})^T)^{-1}\boldsymbol{H}=\boldsymbol{A}-\boldsymbol{M}_{\Po}(\xi;\boldsymbol{\beta})\boldsymbol{H}.\notag
\end{align}
Hence, $\boldsymbol{A}=\boldsymbol{M}(\xi;\boldsymbol{\beta})\tilde{\boldsymbol{H}}$ is equivalent to $\boldsymbol{A}=\boldsymbol{M}_{\Po}(\xi;\boldsymbol{\beta})\boldsymbol{H}$ and thus the identifiability of $\boldsymbol{A}^T \boldsymbol{\beta}$ is equivalent in both models.
\end{proof}
\begin{proof}[Proof of Lemma \ref{Lemma Verallgemeinerte Inverse Informationsmatrix Poisson-Gamma-Modell}]\ \\
Let $\boldsymbol{M}=\boldsymbol{M}(\xi;\boldsymbol{\beta})$ and $\boldsymbol{M}_{\Po}=\boldsymbol{M}_{\Po}(\xi;\boldsymbol{\beta})$. Let $\boldsymbol{M}_{\Po}^-$ be a $p\times p$-matrix and let $\boldsymbol{M}^-=\frac{b}{a}\cdot \boldsymbol{M}_{\Po}^-+\frac{m}{a}\cdot \boldsymbol{e}_1\boldsymbol{e}_1^T$. The matrix $\boldsymbol{M}^-$ is a generalized inverse of $\boldsymbol{M}$ if it satisfies the condition $\boldsymbol{M}\boldsymbol{M}^-\boldsymbol{M}=\boldsymbol{M}$. We have:
\begin{align}
\boldsymbol{M}\boldsymbol{M}^-\boldsymbol{M}=\boldsymbol{M}\cdot\left(\frac{b}{a}\cdot \boldsymbol{M}_{\Po}^-+\frac{m}{a}\cdot \boldsymbol{e}_1\boldsymbol{e}_1^T\right)\cdot \boldsymbol{M}=\frac{b}{a}\cdot\boldsymbol{M}\boldsymbol{M}_{\Po}^-\boldsymbol{M}+\frac{m}{a}\cdot\boldsymbol{M}\boldsymbol{e}_1\boldsymbol{e}_1^T\boldsymbol{M}.\notag
\end{align}
With the matrix $\boldsymbol{L}=\boldsymbol{L}(\xi;\boldsymbol{\beta})$ from Lemma \ref{Lemma Matrix für Darstellung Informationsmatrix Poisson-Gamma-Modell} it follows that $(b/a)\cdot \boldsymbol{M}\boldsymbol{M}_{\Po}^-\boldsymbol{M}=(a/b)\cdot\boldsymbol{L}\boldsymbol{M}_{\Po}\boldsymbol{M}_{\Po}^-\boldsymbol{M}_{\Po}\boldsymbol{L}^T$. Furthermore, by Lemma \ref{Lemma Matrix für Darstellung Informationsmatrix Poisson-Gamma-Modell} (i) and (iii) we have:
\begin{align}
\frac{m}{a}\cdot\boldsymbol{M}\boldsymbol{e}_1\boldsymbol{e}_1^T\boldsymbol{M}-\boldsymbol{M}=\left(\frac{m}{a}\cdot\boldsymbol{M}\boldsymbol{e}_1\boldsymbol{e}_1^T-\boldsymbol{I}\right)\cdot\boldsymbol{M}=-\boldsymbol{L}\boldsymbol{M}=-\frac{a}{b}\cdot\boldsymbol{L}\boldsymbol{M}_{\Po}\boldsymbol{L}^T.\notag
\end{align}
Hence $\boldsymbol{M}\boldsymbol{M}^-\boldsymbol{M}-\boldsymbol{M}=(a/b)\cdot\boldsymbol{L}\cdot\left(\boldsymbol{M}_{\Po}\boldsymbol{M}_{\Po}^-\boldsymbol{M}_{\Po}-\boldsymbol{M}_{\Po}\right)\cdot\boldsymbol{L}^T$. The matrix $\boldsymbol{L}$ is regular by Lemma \ref{Lemma Matrix für Darstellung Informationsmatrix Poisson-Gamma-Modell} (ii). Thus $\boldsymbol{M}\boldsymbol{M}^-\boldsymbol{M}=\boldsymbol{M}$ is equivalent to $\boldsymbol{M}_{\Po}\boldsymbol{M}_{\Po}^-\boldsymbol{M}_{\Po}=\boldsymbol{M}_{\Po}$.
\end{proof}
\begin{proof}[Proof of Theorem \ref{Satz Kriteriumsfunktion D-Optimalität Poisson-Gamma-Modell}]\ \\
If $\boldsymbol{M}(\xi;\boldsymbol{\beta})$ is singular, then so is $\boldsymbol{M}_{\Po}(\xi;\boldsymbol{\beta})$ according to Lemma \ref{Lemma Rang Zusammenhang Informationsmatrix} and thus equation \eqref{Determinante Informationsmatrix Poisson-Gamma-Modell} holds. If $\boldsymbol{M}(\xi;\boldsymbol{\beta})$ is regular, we have with the inverse of $\boldsymbol{M}(\xi;\boldsymbol{\beta})$ from Remark~\ref{Bemerkung Inverse Informationsmatrix Poisson-Gamma-Modell}:
\begin{align}
\det\bigl(\boldsymbol{M}(\xi;\boldsymbol{\beta})\bigr)&=\frac{1}{\det\bigl(\boldsymbol{M}(\xi;\boldsymbol{\beta})^{-1}\bigr)}=\frac{1}{\det\left(\frac{b}{a}\cdot \boldsymbol{M}_{\Po}(\xi;\boldsymbol{\beta})^{-1}+\frac{m}{a}\cdot \boldsymbol{e}_1\boldsymbol{e}_1^T\right)}\notag\\
&=\frac{1}{\left(1+\frac{a}{b}\cdot\frac{m}{a}\cdot\boldsymbol{e}_1^T\boldsymbol{M}_{\Po}(\xi;\boldsymbol{\beta})\boldsymbol{e}_1\right)\cdot\det\left(\frac{b}{a}\cdot \boldsymbol{M}_{\Po}(\xi;\boldsymbol{\beta})^{-1}\right)}\notag\\
&=\left(\frac{a}{b}\right)^p\cdot\frac{\det\bigl(\boldsymbol{M}_{\Po}(\xi;\boldsymbol{\beta})\bigr)}{1+\frac{m}{b}\cdot\boldsymbol{e}_1^T\boldsymbol{M}_{\Po}(\xi;\boldsymbol{\beta})\boldsymbol{e}_1}.\notag
\end{align}
We note that in the third step the matrix determinant lemma was used.
\end{proof}
\begin{proof}[Proof of Theorem \ref{Satz D-optimale Gewichte Poisson-Gamma-Modell}]\ \\
Let $a_j=1+\frac{m}{b}\cdot\exp\bigl(\boldsymbol{f}(\boldsymbol{x}_j)^T\boldsymbol{\beta}\bigr)$ for $j=1,\ldots,p$. By Theorem \ref{Satz Kriteriumsfunktion D-Optimalität Poisson-Gamma-Modell} and with $\boldsymbol{M}_{\Po}(\xi;\boldsymbol{\beta})=\boldsymbol{X}^T\boldsymbol{W}\boldsymbol{\Lambda}\boldsymbol{X}$ the criterion function is given by:
\begin{align}
\det(\boldsymbol{M}(\xi;\boldsymbol{\beta}))=\left(\frac{a}{b}\right)^p\cdot\frac{\det(\boldsymbol{M}_{\Po}(\xi;\boldsymbol{\beta}))}{1+\frac{m}{b}\cdot\sum_{j=1}^{p} w_j\cdot e^{\boldsymbol{f}(\boldsymbol{x}_j)^T\boldsymbol{\beta}}}=\left(\frac{a}{b}\right)^p\cdot\frac{\prod_{j=1}^p w_j\cdot\det(\boldsymbol{X})^2\cdot\det(\boldsymbol{\Lambda})}{\sum_{j=1}^p w_j\cdot a_j}.\notag
\end{align}
Since the support points $\boldsymbol{x}_1,\ldots,\boldsymbol{x}_{p-1}$ satisfy $\boldsymbol{f}(\boldsymbol{x})^T\boldsymbol{\beta}=c$, we have $a_1=\ldots=a_{p-1}$. Hence, the function
\begin{align}
\frac{\prod_{j=1}^p w_j}{\sum_{j=1}^p w_j\cdot a_j}=\frac{\prod_{j=1}^p w_j}{a_1\cdot\sum_{j=1}^{p-1} w_1+w_p\cdot a_p}=\frac{\prod_{j=1}^p w_j}{a_1\cdot(1-w_p)+w_p\cdot a_p}\notag
\end{align}
is to be maximized with respect to the weights $w_1,\ldots,w_p$. For fixed $w_p$ the product is maximal for $w_1=\ldots=w_{p-1}$. Thus, the optimization problem simplifies to maximising the expression $(w_1^{p-1}\cdot w_p)/\left[(1-w_p)\cdot a_1+w_p\cdot a_p\right]$. With $w_1=(1-w_p)/(p-1)$ we have to maximize the function
\begin{align}
g(w_p)=\frac{1}{(p-1)^{p-1}}\cdot\frac{(1-w_p)^{p-1}\cdot w_p}{(1-w_p)\cdot a_1+w_p\cdot a_p}\notag
\end{align}
with respect to $w_p$. Setting the first derivative of $g$ equal to zero yields:
\begin{align}
0=w_p^2\cdot(p-1)\cdot(a_1-a_p)-w_p\cdot p\cdot a_1+a_1.\label{D-optimale Gewichte Ableitung Gleichung Poisson-Gamma-Modell}
\end{align}
This quadratic equation has one solution in the interval $(0,1)$, which is given by
$w_p^{\ast}=2/\big(p+\sqrt{(p-2)^2+4\cdot(p-1)\cdot(a_p/a_1)}\thinspace\big)$. We have $g(0)=g(1)=0$ and $g(w_p)>0$ for $w_p\in(0,1)$. Hence, in the interval $(0,1)$ the function $g$ is maximal at $w_p^{\ast}$. It follows that $w_1^{\ast}=(1-w_p^{\ast})/(p-1)$. Because of $a_p>a_1$ we have:
\begin{align}
w_p^{\ast}<\frac{2}{p+\sqrt{(p-2)^2+4\cdot(p-1)}}=\frac{2}{p+\sqrt{p^2}}=\frac{1}{p}.\notag
\end{align}
With $1=(p-1)\cdot w_1^{\ast}+w_p^{\ast}<(p-1)\cdot w_1^{\ast}+\frac{1}{p}$ and $1=(p-1)\cdot w_1^{\ast}+w_p^{\ast}>(p-1)\cdot w_1^{\ast}$ it follows that $\frac{1}{p}<w_1^{\ast}<\frac{1}{p-1}$.
\end{proof}
\begin{Lemma}\label{Lemma D-optimale Gewichte Gleichung Poisson-Gamma-Modell}
Let $\xi$ be a design with support points $\boldsymbol{x}_1,\ldots,\boldsymbol{x}_p$ and $D$-optimal weights $w_1^{\ast},\ldots,w_p^{\ast}$ as in Theorem \ref{Satz D-optimale Gewichte Poisson-Gamma-Modell}. Then the following equation holds:
\begin{align}
\boldsymbol{e}_1^T\boldsymbol{M}_{\Po}(\xi;\boldsymbol{\beta})\boldsymbol{e}_1\cdot w_p^{\ast}-w_1^{\ast}\cdot e^c+\frac{b}{m}\cdot(1-p\cdot w_1^{\ast})=0.\notag
\end{align}
\end{Lemma}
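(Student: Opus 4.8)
The plan is to rewrite the claimed identity in the notation used in the proof of Theorem~\ref{Satz D-optimale Gewichte Poisson-Gamma-Modell}, where $a_j=1+\frac{m}{b}\cdot\exp\bigl(\boldsymbol{f}(\boldsymbol{x}_j)^T\boldsymbol{\beta}\bigr)$, and then to verify that it is equivalent to the stationarity condition \eqref{D-optimale Gewichte Ableitung Gleichung Poisson-Gamma-Modell} already established there for the $D$-optimal weight $w_p^{\ast}$.

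First I would evaluate $\boldsymbol{e}_1^T\boldsymbol{M}_{\Po}(\xi;\boldsymbol{\beta})\boldsymbol{e}_1$ explicitly. Since the regression function $\boldsymbol{f}(\boldsymbol{x})=(1,f_1,\ldots,f_{p-1})^T$ has first component equal to $1$, we have $\boldsymbol{e}_1^T\boldsymbol{f}(\boldsymbol{x}_j)=1$ for all $j$, so that
\begin{align}
\boldsymbol{e}_1^T\boldsymbol{M}_{\Po}(\xi;\boldsymbol{\beta})\boldsymbol{e}_1=\sum_{j=1}^p w_j^{\ast}\cdot\exp\bigl(\boldsymbol{f}(\boldsymbol{x}_j)^T\boldsymbol{\beta}\bigr)=(p-1)\cdot w_1^{\ast}\cdot e^c+w_p^{\ast}\cdot\exp\bigl(\boldsymbol{f}(\boldsymbol{x}_p)^T\boldsymbol{\beta}\bigr),\notag
\end{align}
where I use $w_1^{\ast}=\cdots=w_{p-1}^{\ast}$ and $\boldsymbol{f}(\boldsymbol{x}_j)^T\boldsymbol{\beta}=c$ for $j=1,\ldots,p-1$ from Theorem~\ref{Satz D-optimale Gewichte Poisson-Gamma-Modell}.

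Next I would multiply the claimed identity by $m/b$ and substitute $\frac{m}{b}e^c=a_1-1$ and $\frac{m}{b}\exp(\boldsymbol{f}(\boldsymbol{x}_p)^T\boldsymbol{\beta})=a_p-1$, together with $(p-1)w_1^{\ast}+w_p^{\ast}=1$ and $w_1^{\ast}=(1-w_p^{\ast})/(p-1)$. The terms not quadratic in $w_p^{\ast}$ cancel, and after multiplying through by $p-1$ the identity reduces to
\begin{align}
(p-1)\cdot(w_p^{\ast})^2\cdot(a_1-a_p)-p\cdot w_p^{\ast}\cdot a_1+a_1=0,\notag
\end{align}
which is precisely equation \eqref{D-optimale Gewichte Ableitung Gleichung Poisson-Gamma-Modell}, satisfied by the $D$-optimal weight $w_p^{\ast}$ by construction. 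Since every step is a multiplication by a nonzero factor, reversing the chain yields the claim.

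No step is a genuine obstacle; the argument is a sequence of substitutions. The only point that is not purely mechanical is recognizing that the first coordinate of $\boldsymbol{f}$ turns $\boldsymbol{e}_1^T\boldsymbol{M}_{\Po}(\xi;\boldsymbol{\beta})\boldsymbol{e}_1$ into the plain weighted sum $\sum_{j}w_j^{\ast}\exp(\boldsymbol{f}(\boldsymbol{x}_j)^T\boldsymbol{\beta})$, after which the bookkeeping must be organized carefully so that all terms linear or constant in $w_p^{\ast}$ indeed cancel and the quadratic stationarity equation emerges.
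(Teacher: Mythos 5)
Your proposal is correct and is essentially the paper's own argument run in reverse: the paper starts from the stationarity equation \eqref{D-optimale Gewichte Ableitung Gleichung Poisson-Gamma-Modell} and transforms it into the lemma's identity using exactly the substitutions you employ ($\boldsymbol{e}_1^T\boldsymbol{M}_{\Po}(\xi;\boldsymbol{\beta})\boldsymbol{e}_1=(p-1)\cdot w_1^{\ast}\cdot e^{c}+w_p^{\ast}\cdot e^{\boldsymbol{f}(\boldsymbol{x}_p)^T\boldsymbol{\beta}}$, $w_p^{\ast}=1-(p-1)\cdot w_1^{\ast}$, multiplication by $b/m$ and division by $p-1$). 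Your reduction of the claimed identity back to that quadratic equation checks out algebraically (the linear and constant terms combine into $-p\cdot w_p^{\ast}\cdot a_1+a_1$ rather than cancelling outright, but the end result is right), and since all steps are reversible the two directions are equivalent.
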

\begin{proof}\ \\
Let $\lambda_1=\exp(c)$ and $\lambda_p=\exp(\boldsymbol{f}(\boldsymbol{x}_p)^T\boldsymbol{\beta})$. According to equation \eqref{D-optimale Gewichte Ableitung Gleichung Poisson-Gamma-Modell} from the proof of Theorem \ref{Satz D-optimale Gewichte Poisson-Gamma-Modell} the $D$-optimal weights satisfy $0=(w_p^{\ast})^2\cdot(p-1)\cdot(a_1-a_p)-w_p^{\ast}\cdot p\cdot a_1+a_1$, where $a_1=1+\frac{m}{b}\cdot \lambda_1$ and $a_p=1+\frac{m}{b}\cdot \lambda_p$. It follows that:
\begin{align}
0=\frac{m}{b}\cdot(w_p^{\ast})^2\cdot(p-1)\cdot(\lambda_1-\lambda_p)+\left(1+\frac{m}{b}\cdot \lambda_1\right)\cdot(1-p\cdot w_p^{\ast}).\notag
\end{align}
We have $w_p^{\ast}=1-(p-1)\cdot w_1^{\ast}$. Multiplication by $b/m$ and division by $p-1$ yields:
\begin{align}
0&=w_p^{\ast}\cdot\big(1-(p-1)\cdot w_1^{\ast}\big)\cdot \lambda_1-(w_p^{\ast})^2\cdot \lambda_p+\left(\frac{b}{m}+\lambda_1\right)\cdot\frac{1-p\cdot w_p^{\ast}}{p-1}\notag\\
&=-w_p^{\ast}\cdot\big((p-1)\cdot w_1^{\ast}\cdot \lambda_1+w_p^{\ast}\cdot \lambda_p\big)+\left(\frac{1-w_p^{\ast}}{p-1}\right)\cdot \lambda_1+\frac{b}{m}\cdot\frac{1-p\cdot w_p^{\ast}}{p-1}\notag\\
&=-w_p^{\ast}\cdot \boldsymbol{e}_1^T\boldsymbol{M}_{\Po}(\xi;\boldsymbol{\beta})\boldsymbol{e}_1+w_1^{\ast}\cdot \lambda_1+\frac{b}{m}\cdot\frac{1-p\cdot w_p^{\ast}}{p-1}.\notag
\end{align}
With $(1-p\cdot w_p^{\ast})/(p-1)=\big(1-p\cdot(1-(p-1)\cdot w_1^{\ast})\big)/(p-1)=-1+p\cdot w_1^{\ast}$ and multiplication with $-1$ the equation given in the lemma follows.
\end{proof}
\begin{proof}[Proof of Theorem \ref{Satz D-optimale Design Poisson-Gamma-Modell}]\ \\
For simplicity of notation let $\boldsymbol{M}=\boldsymbol{M}(\xi^{\ast},\boldsymbol{\beta})$ and $\boldsymbol{M}_{\Po}=\boldsymbol{M}_{\Po}(\xi^{\ast},\boldsymbol{\beta})$. Since the optimization problem does not depend on the positive constant $a$, we choose $a=b$.\\
The extended design region $\mathscr{X}_{\text{ext}}$ with $x_i\in(-\infty,v_i]$ for $\beta_i>0$ and $x_i\in[u_i,\infty)$ for $\beta_i<0$ is considered. Since the matrix $\boldsymbol{M}_{\Po}^{-1}\boldsymbol{M}\boldsymbol{M}_{\Po}^{-1}$ is positive definite and the sets $\left\{\boldsymbol{x}\in\mathbb{R}^{p-1}:\boldsymbol{f}(\boldsymbol{x})^T\boldsymbol{\beta}=c\right\}\cap\mathscr{X}_{\text{ext}}$ are bounded for all $c\in\mathbb{R}$, the left-hand side of the condition in the Equivalence Theorem \ref{Satz Äquivalenzsatz} is maximized at the edges of the extended design region $\mathscr{X}_{\text{ext}}$, so it suffices to show that the condition of the equivalence theorem is satisfied on the edges of $\mathscr{X}_{\text{ext}}$ in order to prove the $D$-optimality of the design $\xi^{\ast}$ (cf.\ Schmidt and Schwabe (2017), Schmidt (2018)). Thus, the design $\xi^{\ast}$ is $D$-optimal if
\begin{align}
e^{\boldsymbol{f}(\boldsymbol{d}+(x_i-d_i)\cdot \boldsymbol{e}_i)^T\boldsymbol{\beta}}\cdot \boldsymbol{f}\bigl(\boldsymbol{d}+(x_i-d_i)\cdot \boldsymbol{e}_i\bigr)^T\boldsymbol{M}_{\Po}^{-1}\boldsymbol{M}\boldsymbol{M}_{\Po}^{-1}\boldsymbol{f}\bigl(\boldsymbol{d}+(x_i-d_i)\cdot \boldsymbol{e}_i\bigr)\leq\tr\bigl(\boldsymbol{M}\boldsymbol{M}_{\Po}^{-1}\bigr)\notag
\end{align}
holds for all $x_i$, $i=1,\ldots,p-1$. We define the following two functions:
\begin{align}
g_i(x)&=\boldsymbol{f}\bigl(\boldsymbol{d}+(x-d_i)\cdot \boldsymbol{e}_i\bigr)^T\boldsymbol{M}_{\Po}^{-1}\boldsymbol{M}\boldsymbol{M}_{\Po}^{-1}\boldsymbol{f}\bigl(\boldsymbol{d}+(x-d_i)\cdot \boldsymbol{e}_i\bigr),\notag\\
h_i(x)&=g_i(x)-\exp\bigl(-\boldsymbol{f}(\boldsymbol{d}+(x-d_i)\cdot \boldsymbol{e}_i)^T\boldsymbol{\beta}\bigr)\cdot\tr\bigl(\boldsymbol{M}\boldsymbol{M}_{\Po}^{-1}\bigr).\notag
\end{align}
The condition of the equivalence theorem is equivalent to
\begin{align}
h_i(x_i)=g_i(x_i)-\exp\bigl(-\boldsymbol{f}(\boldsymbol{d}+(x_i-d_i)\cdot \boldsymbol{e}_i)^T\boldsymbol{\beta}\bigr)\cdot\tr\bigl(\boldsymbol{M}\boldsymbol{M}_{\Po}^{-1}\bigr)\leq0\label{Äquivalenzsatz Bedingung}
\end{align}
for all $x_i$, $i=1,\ldots,p-1$.\\
With the matrix $\boldsymbol{L}=\boldsymbol{I}-\left(\boldsymbol{M}_{\Po}\boldsymbol{e}_{1,p}\boldsymbol{e}_{1,p}^T\right)/\left(\boldsymbol{e}_{1,p}^T\boldsymbol{M}_{\Po}\boldsymbol{e}_{1,p}+\frac{b}{m}\right)$
from Lemma \ref{Lemma Matrix für Darstellung Informationsmatrix Poisson-Gamma-Modell} we have $\boldsymbol{M}\boldsymbol{M}_{\Po}^{-1}=\boldsymbol{L}\boldsymbol{M}_{\Po}\boldsymbol{M}_{\Po}^{-1}=\boldsymbol{L}$ and hence:
\begin{align}
\boldsymbol{M}_{\Po}^{-1}\boldsymbol{M}\boldsymbol{M}_{\Po}^{-1}&=\boldsymbol{M}_{\Po}^{-1}\boldsymbol{L}=\boldsymbol{M}_{\Po}^{-1}-\frac{\boldsymbol{e}_{1,p}\boldsymbol{e}_{1,p}^T}{\boldsymbol{e}_{1,p}^T\boldsymbol{M}_{\Po}\boldsymbol{e}_{1,p}+\frac{b}{m}},\notag\\
\tr\bigl(\boldsymbol{M}\boldsymbol{M}_{\Po}^{-1}\bigr)&=\tr(\boldsymbol{L})=p-\frac{\boldsymbol{e}_{1,p}^T\boldsymbol{M}_{\Po}\boldsymbol{e}_{1,p}}{\boldsymbol{e}_{1,p}^T\boldsymbol{M}_{\Po}\boldsymbol{e}_{1,p}+\frac{b}{m}}.\notag
\end{align}
Let $\lambda_1=\exp\bigl(\boldsymbol{f}(\boldsymbol{d})^T\boldsymbol{\beta}-z^{\ast}\bigr)$ and $\lambda_p=\exp\bigl(\boldsymbol{f}(\boldsymbol{d})^T\boldsymbol{\beta}\bigr)$. For the design $\xi^{\ast}$ the information matrix for the Poisson model can be decomposed as $\boldsymbol{M}_{\Po}=\boldsymbol{X}^T\boldsymbol{\Lambda}\boldsymbol{W}\boldsymbol{X}$ with $\boldsymbol{\Lambda}=\Diag(\lambda_1,\ldots,\lambda_1,\lambda_p)$, $\boldsymbol{W}=\Diag(w_1^{\ast},\ldots,w_1^{\ast},w_p^{\ast})$ and
\begin{align}
\boldsymbol{X}=\begin{pmatrix}\boldsymbol{1}_{p-1} & & \boldsymbol{1}_{p-1}\cdot \boldsymbol{d}^T-z^{\ast}\cdot\Diag(\tilde{\boldsymbol{\beta}}_{\text{rec}})\\
1 & & \boldsymbol{d}^T\end{pmatrix}\notag
\end{align}
with $\boldsymbol{1}_{p-1}=(1,\ldots,1)^T\in\mathbb{R}^{p-1}$ and $\tilde{\boldsymbol{\beta}}_{\text{rec}}=(1/\beta_1,\ldots,1/\beta_{p-1})^T$, which has the reciprocal entries of $\tilde{\boldsymbol{\beta}}=(\beta_1,\ldots,\beta_{p-1})^T$. The inverse of $\boldsymbol{X}$ is given by
\begin{align}
\boldsymbol{X}^{-1}=\frac{1}{z^{\ast}}\cdot\begin{pmatrix}\bigl(\Diag(\tilde{\boldsymbol{\beta}})\boldsymbol{d}\bigr)^T & & z^{\ast}-\boldsymbol{d}^T\tilde{\boldsymbol{\beta}}\\
-\Diag(\tilde{\boldsymbol{\beta}}) & & \tilde{\boldsymbol{\beta}}\end{pmatrix}.\notag
\end{align}
Now, we show that equality holds in condition \eqref{Äquivalenzsatz Bedingung} at the support points of the design $\xi^{\ast}$. First we consider the support point $\boldsymbol{d}$. The inverse of the information matrix for the Poisson model is given by $\boldsymbol{M}_{\Po}^{-1}=\boldsymbol{X}^{-1}\boldsymbol{W}^{-1}\boldsymbol{\Lambda}^{-1}(\boldsymbol{X}^T)^{-1}$. With $(\boldsymbol{X}^T)^{-1}\boldsymbol{f}(\boldsymbol{d})=(\boldsymbol{X}^T)^{-1}\boldsymbol{X}^T\boldsymbol{e}_{p,p}=\boldsymbol{e}_{p,p}$ we have:
\begin{align}
g_i(d_i)&=\boldsymbol{f}\bigl(\boldsymbol{d}+(d_i-d_i)\cdot \boldsymbol{e}_i\bigr)^T\boldsymbol{M}_{\Po}^{-1}\boldsymbol{M}\boldsymbol{M}_{\Po}^{-1}\boldsymbol{f}\bigl(\boldsymbol{d}+(d_i-d_i)\cdot \boldsymbol{e}_i\bigr)\notag\\
&=\boldsymbol{f}(\boldsymbol{d})^T\cdot\left(\boldsymbol{X}^{-1}\boldsymbol{W}^{-1}\boldsymbol{\Lambda}^{-1}(\boldsymbol{X}^T)^{-1}-\frac{\boldsymbol{e}_{1,p}\boldsymbol{e}_{1,p}^T}{\boldsymbol{e}_{1,p}^T\boldsymbol{M}_{\Po}\boldsymbol{e}_{1,p}+\frac{b}{m}}\right)\cdot \boldsymbol{f}(\boldsymbol{d})\notag\\
&=\boldsymbol{e}_{p,p}^T\boldsymbol{W}^{-1}\boldsymbol{\Lambda}^{-1}\boldsymbol{e}_{p,p}-\frac{1}{\boldsymbol{e}_{1,p}^T\boldsymbol{M}_{\Po}\boldsymbol{e}_{1,p}+\frac{b}{m}}=\frac{1}{w_p^{\ast}\cdot \lambda_p}-\frac{1}{\boldsymbol{e}_{1,p}^T\boldsymbol{M}_{\Po}\boldsymbol{e}_{1,p}+\frac{b}{m}}.\notag
\end{align}
It follows:
\begin{align}
h_i(d_i)&=g_i(d_i)-\exp\bigl(-\boldsymbol{f}(\boldsymbol{d}+(d_i-d_i)\cdot \boldsymbol{e}_i)^T\boldsymbol{\beta}\bigr)\cdot\tr\bigl(\boldsymbol{M}\boldsymbol{M}_{\Po}^{-1}\bigr)\notag\\
&=\frac{\frac{1}{w_p^{\ast}\cdot \lambda_p}\cdot\left(\boldsymbol{e}_{1,p}^T\boldsymbol{M}_{\Po}\boldsymbol{e}_{1,p}+\frac{b}{m}\right)-1}{\boldsymbol{e}_{1,p}^T\boldsymbol{M}_{\Po}\boldsymbol{e}_{1,p}+\frac{b}{m}}-\frac{1}{\lambda_p}\cdot\frac{(p-1)\cdot \boldsymbol{e}_{1,p}^T\boldsymbol{M}_{\Po}\boldsymbol{e}_{1,p}+p\cdot\frac{b}{m}}{\boldsymbol{e}_{1,p}^T\boldsymbol{M}_{\Po}\boldsymbol{e}_{1,p}+\frac{b}{m}}\notag\\
&=\frac{\boldsymbol{e}_{1,p}^T\boldsymbol{M}_{\Po}\boldsymbol{e}_{1,p}-w_p^{\ast}\cdot \lambda_p-(p-1)\cdot w_p^{\ast}\cdot \boldsymbol{e}_{1,p}^T\boldsymbol{M}_{\Po}\boldsymbol{e}_{1,p}+\frac{b}{m}\cdot(1-p\cdot w_p^{\ast})}{w_p^{\ast}\cdot \lambda_p\cdot\left(\boldsymbol{e}_{1,p}^T\boldsymbol{M}_{\Po}\boldsymbol{e}_{1,p}+\frac{b}{m}\right)}.\notag
\end{align}
With $\boldsymbol{e}_{1,p}^T\boldsymbol{M}_{\Po}\boldsymbol{e}_{1,p}=(p-1)\cdot w_1^{\ast}\cdot \lambda_1+w_p^{\ast}\cdot \lambda_p$ and $(1-p\cdot w_p^{\ast})/(p-1)=-1+p\cdot w_1^{\ast}$ we obtain:
\begin{align}
h_i(d_i)&=\frac{(p-1)\cdot w_1^{\ast}\cdot \lambda_1-(p-1)\cdot w_p^{\ast}\cdot \boldsymbol{e}_{1,p}^T\boldsymbol{M}_{\Po}\boldsymbol{e}_{1,p}+\frac{b}{m}\cdot(1-p\cdot w_p^{\ast})}{w_p^{\ast}\cdot \lambda_p\cdot\left(\boldsymbol{e}_{1,p}^T\boldsymbol{M}_{\Po}\boldsymbol{e}_{1,p}+\frac{b}{m}\right)}\notag\\
&=(p-1)\cdot\frac{w_1^{\ast}\cdot \lambda_1-w_p^{\ast}\cdot \boldsymbol{e}_{1,p}^T\boldsymbol{M}_{\Po}\boldsymbol{e}_{1,p}+\frac{b}{m}\cdot(-1+p\cdot w_1^{\ast})}{w_p^{\ast}\cdot \lambda_p\cdot\left(\boldsymbol{e}_{1,p}^T\boldsymbol{M}_{\Po}\boldsymbol{e}_{1,p}+\frac{b}{m}\right)}.\notag
\end{align}
Since the numerator is equal to zero by Lemma \ref{Lemma D-optimale Gewichte Gleichung Poisson-Gamma-Modell}, it follows that $h_i(d_i)=0$.\\
Now, it is shown that equality in condition \eqref{Äquivalenzsatz Bedingung} also holds at the support points $\boldsymbol{d}-(z^{\ast}/\beta_i)\cdot \boldsymbol{e}_i$, $i=1,\ldots,p-1$. Using the relation $(\boldsymbol{X}^T)^{-1}\boldsymbol{f}\bigl(\boldsymbol{d}-(z^{\ast}/\beta_i)\cdot \boldsymbol{e}_i\bigr)=(\boldsymbol{X}^T)^{-1}\boldsymbol{X}^T\boldsymbol{e}_{i,p}=\boldsymbol{e}_{i,p}$ we obtain:
\begin{align}
&g_i\negthinspace\left(d_i-\frac{z^{\ast}}{\beta_i}\right)\notag\\
&=\boldsymbol{f}\negthinspace\left(\boldsymbol{d}-\frac{z^{\ast}}{\beta_i}\cdot \boldsymbol{e}_i\right)^T\cdot\left(\boldsymbol{X}^{-1}\boldsymbol{W}^{-1}\boldsymbol{\Lambda}^{-1}(\boldsymbol{X}^T)^{-1}-\frac{\boldsymbol{e}_{1,p}\boldsymbol{e}_{1,p}^T}{\boldsymbol{e}_{1,p}^T\boldsymbol{M}_{\Po}\boldsymbol{e}_{1,p}+\frac{b}{m}}\right)\cdot \boldsymbol{f}\negthinspace\left(\boldsymbol{d}-\frac{z^{\ast}}{\beta_i}\cdot \boldsymbol{e}_i\right)\notag\\
&=\boldsymbol{e}_{i,p}^T\boldsymbol{W}^{-1}\boldsymbol{\Lambda}^{-1}\boldsymbol{e}_{i,p}-\frac{1}{\boldsymbol{e}_{1,p}^T\boldsymbol{M}_{\Po}\boldsymbol{e}_{1,p}+\frac{b}{m}}=\frac{1}{w_1^{\ast}\cdot \lambda_1}-\frac{1}{\boldsymbol{e}_{1,p}^T\boldsymbol{M}_{\Po}\boldsymbol{e}_{1,p}+\frac{b}{m}}.\notag
\end{align}
With $\exp\bigl(\boldsymbol{f}(\boldsymbol{d}-(z^{\ast}/\beta_i)\cdot \boldsymbol{e}_i)^T\boldsymbol{\beta}\bigr)=\exp\left(\boldsymbol{f}(\boldsymbol{d})^T\boldsymbol{\beta}-z^{\ast}\right)=\lambda_1$ we have:
\begin{align}
h_i\negthinspace\left(d_i-\frac{z^{\ast}}{\beta_i}\right)&=\frac{\frac{1}{w_1^{\ast}\cdot \lambda_1}\cdot\left(\boldsymbol{e}_{1,p}^T\boldsymbol{M}_{\Po}\boldsymbol{e}_{1,p}+\frac{b}{m}\right)-1}{\boldsymbol{e}_{1,p}^T\boldsymbol{M}_{\Po}\boldsymbol{e}_{1,p}+\frac{b}{m}}-\frac{1}{\lambda_1}\cdot\frac{(p-1)\cdot \boldsymbol{e}_{1,p}^T\boldsymbol{M}_{\Po}\boldsymbol{e}_{1,p}+p\cdot\frac{b}{m}}{\boldsymbol{e}_{1,p}^T\boldsymbol{M}_{\Po}\boldsymbol{e}_{1,p}+\frac{b}{m}}\notag\\
&=\frac{\boldsymbol{e}_{1,p}^T\boldsymbol{M}_{\Po}\boldsymbol{e}_{1,p}\cdot\left(1-(p-1)\cdot w_1^{\ast}\right)-w_1^{\ast}\cdot \lambda_1+\frac{b}{m}\cdot(1-p\cdot w_1^{\ast})}{w_1^{\ast}\cdot \lambda_1\cdot\left(\boldsymbol{e}_{1,p}^T\boldsymbol{M}_{\Po}\boldsymbol{e}_{1,p}+\frac{b}{m}\right)}.\notag
\end{align}
Since $1-(p-1)\cdot w_1^{\ast}=w_p^{\ast}$, it follows that $h_i(d_i-z^{\ast}/\beta_i)=0$ holds for $i=1,\ldots,p-1$ by Lemma \ref{Lemma D-optimale Gewichte Gleichung Poisson-Gamma-Modell}.\\
The first and second derivative of $h_i$ are given by:
\begin{align}
h_i'(x)&=g_i'(x)+\beta_i\cdot \exp\bigl(-\boldsymbol{f}(\boldsymbol{d}+(x-d_i)\cdot \boldsymbol{e}_i)^T\boldsymbol{\beta}\bigr)\cdot\tr\bigl(\boldsymbol{M}\boldsymbol{M}_{\Po}^{-1}\bigr),\notag\\
h_i''(x)&=g_i''(x)-\beta_i^2\cdot \exp\bigl(-\boldsymbol{f}(\boldsymbol{d}+(x-d_i)\cdot \boldsymbol{e}_i)^T\boldsymbol{\beta}\bigr)\cdot\tr\bigl(\boldsymbol{M}\boldsymbol{M}_{\Po}^{-1}\bigr).\notag
\end{align}
Since $g_i$ is a quadratic polynomial, its second derivative $g_i''$ is constant. The exponential function is injective, so $h_i''$ is also injective and has at most one zero. By Rolle's theorem $h_i'$ can have no more than two zeros. Therefore, $h_i$ has at most two extrema. Moreover, we have $\lim_{x\rightarrow\pm\infty} h_i(x)=\pm\infty$ for $\beta_i>0$ and $\lim_{x\rightarrow\pm\infty} h_i(x)=\mp\infty$ for $\beta_i<0$. Hence there are no saddle points. The design $\xi^{\ast}$ is $D$-optimal if $h_i$ has a maximum at $d_i-z^{\ast}/\beta_i$ for $i=1,\ldots,p-1$. Because of $h_i(d_i)=h_i(d_i-z^{\ast}/\beta_i)=0$ there is a minimum between the two support points. A maximum occurs at $d_i-z^{\ast}/\beta_i$ if $h_i'$ has a zero at $d_i-z^{\ast}/\beta_i$. The derivative of $g_i$ is given by:
\begin{align}
g_i'(x)&=2\cdot \boldsymbol{e}_{i+1,p}^T\boldsymbol{M}_{\Po}^{-1}\boldsymbol{M}\boldsymbol{M}_{\Po}^{-1}\boldsymbol{f}\bigl(\boldsymbol{d}+(x-d_i)\cdot \boldsymbol{e}_i\bigr)\notag\\
&=2\cdot \boldsymbol{e}_{i+1,p}^T\cdot\left(\boldsymbol{X}^{-1}\boldsymbol{W}^{-1}\boldsymbol{\Lambda}^{-1}(\boldsymbol{X}^T)^{-1}-\frac{\boldsymbol{e}_{1,p}\boldsymbol{e}_{1,p}^T}{\boldsymbol{e}_{1,p}^T\boldsymbol{M}_{\Po}\boldsymbol{e}_{1,p}+\frac{b}{m}}\right)\cdot \boldsymbol{f}\bigl(\boldsymbol{d}+(x-d_i)\cdot \boldsymbol{e}_i\bigr).\notag
\end{align}
With $\boldsymbol{e}_{i+1,p}^T\boldsymbol{X}^{-1}=(\beta_i/z^{\ast})\cdot(-\boldsymbol{e}_i^T,1)$ and $(\boldsymbol{X}^T)^{-1}\boldsymbol{f}\bigl(\boldsymbol{d}-(z^{\ast}/\beta_i)\cdot \boldsymbol{e}_i\bigr)=\boldsymbol{e}_{i,p}$ it follows:
\begin{align}
p'\negthinspace\left(d_i-\frac{z^{\ast}}{\beta_i}\right)&=2\cdot \boldsymbol{e}_{i+1,p}^T\cdot\negthinspace\left(\boldsymbol{X}^{-1}\boldsymbol{W}^{-1}\boldsymbol{\Lambda}^{-1}(\boldsymbol{X}^T)^{-1}-\frac{\boldsymbol{e}_{1,p}\boldsymbol{e}_{1,p}^T}{\boldsymbol{e}_{1,p}^T\boldsymbol{M}_{\Po}\boldsymbol{e}_{1,p}+\frac{b}{m}}\right)\negthinspace\cdot \boldsymbol{f}\negthinspace\left(\boldsymbol{d}-\frac{z^{\ast}}{\beta_i}\cdot \boldsymbol{e}_i\right)\notag\\
&=2\cdot\frac{\beta_i}{z^{\ast}}\cdot(-\boldsymbol{e}_i^T,1)\boldsymbol{W}^{-1}\boldsymbol{\Lambda}^{-1}\boldsymbol{e}_{i,p}-2\cdot\frac{\boldsymbol{e}_{i+1,p}^T\boldsymbol{e}_{1,p}\boldsymbol{e}_{1,p}^T\boldsymbol{f}\bigl(\boldsymbol{d}-\frac{z^{\ast}}{\beta_i}\cdot \boldsymbol{e}_i\bigr)}{\boldsymbol{e}_{1,p}^T\boldsymbol{M}_{\Po}\boldsymbol{e}_{1,p}+\frac{b}{m}}\notag\\
&=-2\cdot\frac{\beta_i}{z^{\ast}}\cdot\frac{1}{w_1^{\ast}\cdot \lambda_1}.\notag
\end{align}
Here it was used that $\boldsymbol{e}_{i+1,p}^T\boldsymbol{e}_{1,p}=0$. We have:
\begin{align}
h_i'\negthinspace\left(d_i-\frac{z^{\ast}}{\beta_i}\right)&=-2\cdot\frac{\beta_i}{z^{\ast}}\cdot\frac{1}{w_1^{\ast}\cdot \lambda_1}+\frac{\beta_i}{\lambda_1}\cdot\frac{(p-1)\cdot \boldsymbol{e}_{1,p}^T\boldsymbol{M}_{\Po}\boldsymbol{e}_{1,p}+p\cdot\frac{b}{m}}{\boldsymbol{e}_{1,p}^T\boldsymbol{M}_{\Po}\boldsymbol{e}_{1,p}+\frac{b}{m}}\notag\\
&=\frac{\beta_i}{\lambda_1}\cdot\frac{-\frac{2}{z^{\ast}\cdot w_1^{\ast}}\cdot\left(\boldsymbol{e}_{1,p}^T\boldsymbol{M}_{\Po}\boldsymbol{e}_{1,p}+\frac{b}{m}\right)+(p-1)\cdot \boldsymbol{e}_{1,p}^T\boldsymbol{M}_{\Po}\boldsymbol{e}_{1,p}+p\cdot\frac{b}{m}}{\boldsymbol{e}_{1,p}^T\boldsymbol{M}_{\Po}\boldsymbol{e}_{1,p}+\frac{b}{m}}.\notag
\end{align}
The equation $h_i'(d_i-z^{\ast}/\beta_i)=0$ is equivalent to:
\begin{align}
0&=-2\cdot\left(m\cdot\boldsymbol{e}_{1,p}^T\boldsymbol{M}_{\Po}\boldsymbol{e}_{1,p}+b\right)+z^{\ast}\cdot w_1^{\ast}\cdot\left(m\cdot(p-1)\cdot \boldsymbol{e}_{1,p}^T\boldsymbol{M}_{\Po}\boldsymbol{e}_{1,p}+p\cdot b\right)\notag\\
&=m\cdot\boldsymbol{e}_{1,p}^T\boldsymbol{M}_{\Po}\boldsymbol{e}_{1,p}\cdot\big((p-1)\cdot z^{\ast}\cdot w_1^{\ast}-2\big)+b\cdot\left(p\cdot z^{\ast}\cdot w_1^{\ast}-2\right)\notag\\
&=m\cdot\big((p-1)\cdot w_1^{\ast}\cdot \lambda_1+w_p^{\ast}\cdot \lambda_p\big)\cdot\big((p-1)\cdot z^{\ast}\cdot w_1^{\ast}-2\big)+b\cdot\left(p\cdot z^{\ast}\cdot w_1^{\ast}-2\right).\notag
\end{align}
This equation has a solution, which is shown by using the intermediate value theorem.
Since $\lim_{z^{\ast}\rightarrow0} \lambda_1=\lambda_p$, we have:
\begin{align}
&\lim_{z^{\ast}\rightarrow0} m\cdot\big((p-1)\cdot w_1^{\ast}\cdot \lambda_1+w_p^{\ast}\cdot \lambda_p\big)\cdot\big((p-1)\cdot z^{\ast}\cdot w_1^{\ast}-2\big)+b\cdot\left(p\cdot z^{\ast}\cdot w_1^{\ast}-2\right)\notag\\
&=-2\cdot m\cdot \lambda_p-2\cdot b<0.\notag
\end{align}
Moreover, since $1/p<w_1^{\ast}<1/(p-1)$ by Theorem \ref{Satz D-optimale Gewichte Poisson-Gamma-Modell}, it follows:
\begin{align}
\lim_{z^{\ast}\rightarrow\infty} m\cdot\big((p-1)\cdot w_1^{\ast}\cdot \lambda_1+w_p^{\ast}\cdot \lambda_p\big)\cdot\big((p-1)\cdot z^{\ast}\cdot w_1^{\ast}-2\big)+b\cdot\left(p\cdot z^{\ast}\cdot w_1^{\ast}-2\right)=\infty.\notag
\end{align}
Hence, the equation has a solution in the interval $(0,\infty)$. Thus the design $\xi^{\ast}$ is $D$-optimal. If the equation had more than one solution in the interval $(0,\infty)$, then another $D$-optimal design exists. Since the criterion function is concave, any convex combination of these $D$-optimal designs would also be $D$-optimal. The resulting design would have more than two support points on an edge, which is a contradiction to the structure of $h_i$. Thus, the solution of the equation in the interval $(0,\infty)$ is unique and so is the $D$-optimal design. If $z^{\ast}\leq\min_{i=1,\ldots,p-1}\bigl(\left|\beta_i\right|\cdot(v_i-u_i)\bigr)$, then all support points of $\xi^{\ast}$ are located within $\mathscr{X}\subseteq\mathscr{X}_{\text{ext}}$. Hence, the design $\xi^{\ast}$ is $D$-optimal on $\mathscr{X}$.
\end{proof}
\begin{proof}[Proof of Theorem \ref{Satz DA-optimale Design}]\ \\
With Lemma \ref{Lemma Verallgemeinerte Inverse Informationsmatrix Poisson-Gamma-Modell} for the generalized inverse of the information matrix the following equivalence for the criterion functions for the Poisson-Gamma and Poisson model holds:
\begin{align}
\det\bigl(\boldsymbol{A}^T \boldsymbol{M}(\xi;\boldsymbol{\beta})^{-} \boldsymbol{A}\bigr)&=\det\left(\boldsymbol{A}^T\cdot\left(\frac{b}{a}\cdot \boldsymbol{M}_{\Po}(\xi;\boldsymbol{\beta})^-+\frac{m}{a}\cdot \boldsymbol{e}_1\boldsymbol{e}_1^T\right)\boldsymbol{A}\right)\notag\\
&=\left(\frac{b}{a}\right)^s\cdot\det\bigl(\boldsymbol{A}^T \boldsymbol{M}_{\Po}(\xi;\boldsymbol{\beta})^{-} \boldsymbol{A}\bigr).\notag
\end{align}
It was used that $\boldsymbol{A}^T\boldsymbol{e}_1=\boldsymbol{0}_s$. Since the identifiability condition is equivalent in both models by Theorem \ref{Satz Identifizierbarkeit}, the $D_A$-optimal designs coincide.
\end{proof}


\begin{thebibliography}{99}
\setlength{\itemsep}{0pt}
\bibitem{Atkinson} Atkinson, A. C., Donev, A. N. and Tobias, R. D. (2007). \textit{Optimum Experimental Designs, with SAS}. Oxford University Press, Oxford.
\bibitem{Chernoff} Chernoff, H. (1953). \textit{Locally optimal designs for estimating parameters}. The Annals of Mathematical Statistics 24, 586-602.
\bibitem{Fedorov1} Fedorov, V. V. (1972). \textit{Theory of optimal experiments}. Academic Press, New York.
\bibitem{Fedorov2} Fedorov, V. V. and Hackl, P. (1997). \textit{Model-Oriented Design of Experiments}. Springer, New York.
\bibitem{Ford} Ford, I., Torsney, B. und Wu, C. F. J. (1992). \textit{The Use of a Canonical Form in the Construction of Locally Optimal Designs for Non-Linear Problems}. Journal of the Royal Statistical Society, Series B 54, 569-583. 
\bibitem{Grasshoff1} Graßhoff, U., Holling, H. and Schwabe, R. (2016). \textit{Optimal Design for the Rasch Poisson-Gamma Model}. In mODa\,11 - Advances in Model-Oriented Design and Analysis (eds.: Kunert, J., Müller C. H. and Atkinson, A. C.), Springer, 133--141.
\bibitem{Grasshoff2} Graßhoff, U., Holling, H. and Schwabe, R. (2018). \textit{D-optimal Design for the Poisson Regression Models}. Technical Report, Magdeburg.
\bibitem{Mueller} Müller, C. H. (1995). \textit{Maximin efficient designs for estimating nonlinear aspects in linear models}. J. Statist. Plann. Inference, 44, 117--132.
\bibitem{Niaparast} Niaparast, M. (2009). \textit{Optimal Designs For Mixed Effects Poisson Regression Models}. PhD thesis, Faculty of Mathematics, Magdeburg.
\bibitem{Pronzato} Pronzato, L. and Pázman, A. (2013). \textit{Design of Experiments in Nonlinear Models}. Springer, New York.
\bibitem{Rodriguez} Rodríguez-Torreblanca, C. and Rodríguez-Díaz, J. M. (2007). \textit{Locally D- and c-optimal designs for Poisson and negative binomial regression models}. Metrika, 66, 161--172.
\bibitem{Russell} Russell, K. G., Woods, D. C., Lewis, S. M. and Eccleston, E. C. (2009). \textit{D-optimal designs for Poisson regression models}. Statistica Sinica, 19, 721--730.
\bibitem{Schmelter} Schmelter, T. (2007). \textit{The optimality of single-group designs for certain mixed models}. Metrika, 65, 183--193.
\bibitem{Schmidt1} Schmidt, D. and Schwabe, R. (2017). \textit{Optimal Design for Multiple Regression with Information Driven by the Linear Predictor}. Statistica Sinica, 27, 1371--1384.
\bibitem{Schmidt2} Schmidt, D. (2018). \textit{Characterization of $c$-, $L$- and $\phi_k$-optimal designs for a class of Nonlinear Multiple Regression Models}. To appear: Journal of the Royal Statistical Society, Series B.
\bibitem{Silvey} Silvey, S. D. (1980). \textit{Optimal Design}. Chapman and Hall, London.
\bibitem{Wang} Wang, Y., Myers, R. H., Smith, E. P. and Ye, K. (2006). \textit{D-optimal designs for Poisson regression models}. J. Statist. Plann. Inference, 136, 2831--2845.
\end{thebibliography}
\end{document}